\numberwithin{equation}{section}
\newtheorem{theorem}{Theorem}[section]
\newtheorem{proposition}[theorem]{Proposition}
\newtheorem{lemma}[theorem]{Lemma}
\newtheorem{definition}[theorem]{Definition}
\newtheorem{remark}[theorem]{Remark}
\def\bn{{\mathbb N}}
\def\br{{\mathbb R}}
\def\a{\alpha}
\def\<{\langle}
\def\>{\rangle}
\def\1{\mathbf{1}}
\def\xb{{\mathbf{x}}}
\def\yb{{\mathbf{y}}}
\def\l{\lambda}
\begin{document}

\begin{center}
{\Large {\bf On unification of the strong convergence theorems for a
finite family of total asymptotically nonexpansive mappings in
Banach spaces}}\\[1cm]

{\sc Farrukh Mukhamedov}

%\footnote{Corresponding author: Tel. +609-5716400; Fax: +609-5716781}\\[3mm]

{\it Department of Computational \& Theoretical Sciences \\
Faculty of Sciences, International Islamic University Malaysia\\
P.O. Box, 141, 25710, Kuantan, Pahang, Malaysia\\
 e-mail: {\tt far75m@yandex.ru}, \ {\tt farrukh\_m@iium.edu.my}}\\[5mm]

{\sc Mansoor Saburov}\\[3mm]

{\it Department of Computational \& Theoretical Sciences \\
Faculty of Science, International Islamic University Malaysia\\
P.O. Box, 141, 25710, Kuantan, Pahang, Malaysia\\
e-mail: {\tt msaburov@gmail.com}}
\end{center}

\begin{abstract}
In this paper, we unify all know iterative methods by introducing a
new explicit iterative scheme for approximation of common fixed
points of finite families of total asymptotically $I$-nonexpansive
mappings. Note that such a scheme contains as a particular case of
the method introduced in [C.E. Chidume, E.U. Ofoedu, \textit{Inter.
J. Math. \& Math. Sci.} \textbf{2009}(2009) Article ID 615107, 17p].
We construct examples of total asymptotically nonexpansive mappings
which are not asymptotically nonexpansive. Note that no such kind of
examples were known in the literature. We prove the strong
convergence theorems for such iterative process to a common fixed
point of the finite family of total asymptotically $I-$nonexpansive
and total asymptotically nonexpansive mappings, defined on a
nonempty closed convex subset of uniformly convex Banach spaces.
Moreover, our results extend and unify all known results.   \vskip
0.3cm \noindent {\it
Mathematics Subject Classification}: 46N10; 46B20; 47H09; 47H10\\
{\it Key words}: Explicit iteration process; a total asymptotically
$I-$nonexpansive mapping; a total asymptotically nonexpansive
mapping; common fixed point.
\end{abstract}

\section{Introduction}
Let $K$ be a nonempty subset of a real normed linear space $X$ and
$T:K\to K$ be a mapping.
 Denote by $F(T)$ the set of fixed points of $T$, that is, $F(T) =\{x\in K: Tx = x\}$. Throughout this paper,
 we always assume that $X$ is a real Banach space and $F(T)\neq\emptyset$. Now let us recall some known definitions
\begin{definition}
A mapping $T:K\to K$ is said to be:
\begin{itemize}
  \item[(i)] nonexpansive, if $\|Tx-Ty\|\le\|x-y\|$
  for all $x,y\in K$;
  \item[(ii)]  asymptotically nonexpansive,
  if there exists a sequence $\{\lambda_{n}\}\subset[1,\infty)$ with
  $\lim\limits_{n\to\infty}\lambda_{n}=1$ such that
  $\|T^nx-T^ny\|\le\lambda_n\|x-y\|$ for all $x,y\in K$ and $n\in\bn$;
  \item[(iii)] asymptotically nonexpansive in the intermediate sense,
  if it is continuous and the following inequality holds
  \begin{eqnarray}\label{asympnonexpintersense}
  \limsup\limits_{n\to\infty}\sup\limits_{x,y\in
  K}(\|T^nx-T^ny\|-\|x-y\|)\le0.
  \end{eqnarray}
  \end{itemize}
\end{definition}

\begin{remark}
Observe that if we define
\begin{eqnarray}\label{anandsigman}
a_n:=\sup\limits_{x,y\in K}(\|T^nx-T^ny\|-\|x-y\|), \quad\quad
\sigma_n:=\max\{0,a_n\},
\end{eqnarray}
then $\sigma_n\to 0$ as $n\to\infty$ and
\eqref{asympnonexpintersense} reduces to
\begin{eqnarray}\label{asypmnonexpintersenseintermsofsigma}
\|T^nx-T^ny\|\le\|x-y\|+\sigma_n, \quad\quad  \forall x,y\in K,
n\ge1.
\end{eqnarray}
\end{remark}

In \cite{[Browder65]}-\cite{[Browder67]} Browder  studied the
iterative construction for fixed points of nonexpansive mappings on
closed and convex subsets of a Hilbert space. Note that for the past
30 years or so, the study of the iterative processes for the
approximation of fixed points of nonexpansive mappings and fixed
points of some of their generalizations have been flourishing areas
of research for many mathematicians (see for more details
\cite{GK2},\cite{Chidumi}).

The class of asymptotically nonexpansive mappings was introduced by
Goebel and Kirk \cite{[GeobelKrik]} as a generalization of the class
of nonexpansive mappings. They proved that if $K$ is a nonempty
closed convex bounded subset of a uniformly convex real Banach space
and $T$ is an asymptotically nonexpansive self-mapping of $K,$ then
$T$ has a fixed point.

The class of mappings which are asymptotically nonexpansive in the
intermediate sense was introduced by Bruck et al.
\cite{[BruckKuczumowReich]}. It is known \cite{[Kirk]} that if $K$
is a nonempty closed convex bounded subset of a uniformly convex
Banach space $X$ and $T:K\to K$ is an asymptotically nonexpansive
mapping in the intermediate sense, then $T$ has a fixed point. It is
worth mentioning that the class of mappings which are asymptotically
nonexpansive in the intermediate sense contains properly the class
of asymptotically nonexpansive mappings (see, e.g., \cite{[KimKim]})

The iterative approximation problems for nonexpansive mapping,
asymptotically nonexpansive mapping and asymptotically nonexpansive
mapping in the intermediate sense were studied extensively in
\cite{[GeobelKrik]}, \cite{[Kirk]}, \cite{[KimKim]},
\cite{[BruckKuczumowReich]}, \cite{[Bruck]},
\cite{liu},\cite{[Wittmann]}, \cite{R}, \cite{[Gornicki]},
\cite{[Shu]},\cite{ST},
\cite{[TanXu]},\cite{CAC},\cite{CCT},\cite{CKZ},\cite{CZG}.

There are many different types of concepts which generalize a notion
of nonexpansive mapping. One of such concepts is a total
asymptotically nonexpansive mapping (\cite{[AlberChidumeZegeye]})
and second one is an asymptotically $I$-nonexpansive mapping
(\cite{Shah}). Let us recall some notions.

\begin{definition}\label{deftotalasympexpan}
Let $K$ be a nonempty closed subset of a real normed linear space
$X.$ $T:K\to K$ is called a total asymptotically nonexpansive
mapping if there exist nonnegative real sequence $\{\mu_n\}$ and
$\{\lambda_n\}$ with $\mu_n,\lambda_n\to 0$ as $n\to\infty$ and
strictly increasing continuous function
$\phi:{\mathbb{R}}^{+}\to{\mathbb{R}}^{+}$ with $\phi(0)=0$ such
that for all $x,y\in K,$
\begin{eqnarray}\label{inequalityfortotal}
\|T^nx-T^ny\|\le\|x-y\|+\mu_n\phi(\|x-y\|)+\lambda_n,\quad\quad
n\ge1.
\end{eqnarray}
\end{definition}

\begin{remark}
If $\phi(\xi)=\xi,$ then \eqref{inequalityfortotal} reduces to
\begin{eqnarray}\label{inequalityphi=xi}
\|T^nx-T^ny\|\le(1+\mu_n)\|x-y\|+\lambda_n,\quad\quad n\ge1.
\end{eqnarray}
In addition, if $\lambda_n=0$ for all $n\ge1,$ then total
asymptotical nonexpansive mappings coincide with asymptotically
nonexpansive mappings. If $\mu_n=0$ and $\lambda_n=0$ for all
$n\ge1,$ we obtain from \eqref{inequalityphi=xi} the class of
mappings that includes the class of nonexpansive mappings. If
$\mu_n=0$ and $\lambda_n=\sigma_n=\max\{0,a_n\},$ where
$a_n:=\sup\limits_{x,y\in K}(\|T^nx-T^ny\|-\|x-y\|)$ for all
$n\ge1,$ then \eqref{inequalityphi=xi} reduces to
\eqref{asypmnonexpintersenseintermsofsigma} which has been studied
as mappings asymptotically nonexpansive in the intermediate sense.
\end{remark}

The idea of the definition of a total asymptotically nonexpansive
mappings is that to unify various definitions of classes of mappings
associated with the class of asymptotically nonexpansive mappings
and to prove a general convergence theorems applicable to all these
classes of nonlinear mappings.

Alber et.al. \cite{[AlberChidumeZegeye]} studied methods of
approximation of fixed points of total asymptotically nonexpansive
mappings. Chidume et.al. \cite{[ChidumeOfoedu]} introduced an
iterative scheme for approximation of a common fixed point of a
finite family of total asymptotically nonexpansive mappings in
Banach spaces. Recently, Chidume et.al. \cite{[ChidumeOfoedu2009]}
constructed a new iterative sequence much simpler that other types
of approximation of common fixed points of finite families of total
asymptotically nonexpansive mappings.

On the other hand, in \cite{Shah} an asymptotically $I$-nonexpansive
mapping was introduced.

\begin{definition}\label{defasympInonexpansive}
Let $T:K\to K$, $I:K\to K$ be two mappings of a nonempty subset $K$
of a real normed linear space $X$. Then $T$ is said to be:
\begin{itemize}
\item[(i)] {\it $I-$nonexpansive}, if $\|Tx-Ty\|\le\|Ix-Iy\|$ for all $x,y\in K$;
\item[(i)] {\it asymptotically $I-$nonexpansive}, if there exists a sequence $\{\lambda_{n}\}\subset[1,\infty)$ with   $\lim\limits_{n\to\infty}\lambda_{n}=1$ such that
  $\|T^nx-T^ny\|\le\lambda_n\|I^nx-I^ny\|$ for all $x,y\in K$ and $n\ge 1$;
\end{itemize}
\end{definition}

Best approximation properties of $I$-nonexpansive mappings were
investigated in \cite{Shah,CHP}. In \cite{RT} strong convergence of
Mann iterations of $I$-nonexpansive mapping has been proved.  In
\cite{KKJ} the weak convergence of three-step Noor iterative scheme
for an $I$-nonexpansive mappping in a Banach space has been
established. In \cite{TG} the weakly convergence theorem for
asymptotically $I$-nonexpansive mapping defined in Hilbert space was
proved. Recently, in \cite{Gu,T,T1} the weak and strong convergence
of explicit and implicit iteration process to a common fixed point
of a finite family of asymptotically $I$-nonexpansive mappings have
been studied.

In this paper, we introduce a new type of concept of a
generalization of nonexpansive mapping's nation,  which is a
combination of Definitions \ref{deftotalasympexpan} and
\ref{defasympInonexpansive}.

\begin{definition}
Let $T:K\to K$, $I:K\to K$ be two mappings of a nonempty subset $K$
of a real normed linear space $X$.  Then $T$ is said to be a total
asymptotically $I$-nonexpansive mapping if there exist nonnegative
real sequences $\{\mu_n\}$ and $\{\lambda_n\}$ with
$\mu_n,\lambda_n\to 0$ as $n\to\infty$ and the strictly increasing
continuous function $\phi:{\mathbb{R}}^{+}\to{\mathbb{R}}^{+}$ with
$\phi(0)=0$ such that for all $x,y\in K,$
\begin{eqnarray}\label{inequalityfortotalInonexpan}
\|T^nx-T^ny\|\le\|I^nx-I^ny\|+\mu_n\phi(\|I^nx-I^ny\|)+\lambda_n,\quad\quad
n\ge1.
\end{eqnarray}
\end{definition}

Now let us provide an examples of a total asymptotically
$I$-nonexpansive mapping, which is not asymptotically nonexpansive
mapping.

{\bf Example 1.} Let us consider the space $\ell_1$, and let
$B_1=\{\xb\in\ell_1: \ \|\xb\|_1\leq 1\}$. Define a nonlinear
operator $T:\ell_1\to\ell_1$ by
\begin{equation}\label{T}
T_\a(x_1,x_2\dots,x_n,\dots)=\big(0,\a\sqrt{|x_1|},\a x_2,\dots,\a
x_n,\dots\big), \ \ \ \a\in(0,1).
\end{equation}
Let $\|\xb\|_1\leq 1$, then from
\begin{eqnarray*}
\|T_\a(\xb)\|_1&=&\a\bigg(\|\xb\|_1-|x_1|+\sqrt{|x_1|}\bigg)\\[2mm]
&\leq&\a\bigg(1-|x_1|+\sqrt{|x_1|}\bigg)\leq 1
\end{eqnarray*}
one gets $T(B_1)\subset B_1$.

One can find that
\begin{equation*}
T^k_\a(x_1,x_2\dots,x_n,\dots)=\big(\underbrace{0,\dots,0}_k,\a^k\sqrt{|x_1|},
\a^k x_2,\dots,\a^k x_n,\dots\big).
\end{equation*}
Hence,
\begin{eqnarray}\label{Tk}
\|T^k_\a(\xb)-T^k_\a(\yb)\|_1=\a^k\bigg(\|\xb-\yb\|_1+
\big|\sqrt{|x_1|}-\sqrt{|y_1|}\big|-|x_1-y_1|\bigg).
\end{eqnarray}
From $\xb,\yb\in B_1$ we have
\begin{eqnarray}\label{Tk1}
\big|\sqrt{|x_1|}-\sqrt{|y_1|}\big|\leq
\sqrt{\big||x_1|-|y_1|\big|}\leq \sqrt{\|\xb-\yb\|_1}.
\end{eqnarray}
So, it follows from \eqref{Tk},\eqref{Tk} that
\begin{eqnarray}\label{Tk2}
\|T^k_\a(\xb)-T^k_\a(\yb)\|_1\leq\a^k\bigg(\|\xb-\yb\|_1+
\sqrt{\|\xb-\yb\|_1}\bigg) \ \ \ \textrm{for all} \ \xb,\yb\in B_1,
\ k\in\bn.
\end{eqnarray}

Now consider a new Banach space $\br\times \ell_1$ with a norm
$\|\mathbb{X}\|=|x|+\|\xb\|_1$, where $\mathbb{X}=(x,\xb)$ and
define a new mapping $S: \br\times \ell_1\to\br\times \ell_1$ by
$$
S(x,\xb)=(x,T_\a(\xb)).
$$
Let $K=[0,1]\times B_1$. Then it is clear that $S(K)\subset K$. One
can see that $S^k(x,\xb)=(x,T^k_\a(\xb))$. Therefore, using
\eqref{Tk2} we obtain
\begin{eqnarray}\label{Tk2}
\|S^k(\mathbb{X})-S^k(\mathbb{Y})\|&=&|x-y|+\|T^k_\a(\xb)-T^k_\a(\yb)\|_1\nonumber\\[2mm]
&\leq&|x-y|+\a^k\bigg(\|\xb-\yb\|_1+
\sqrt{\|\xb-\yb\|_1}\bigg)\nonumber\\[2mm]
&\leq&\|\mathbb{X}-\mathbb{Y}\|+\a^k\bigg(\|\mathbb{X}-\mathbb{Y}\|+
\sqrt{\|\mathbb{X}-\mathbb{Y}\|}\bigg)
\end{eqnarray}
We let $\phi(t)=t+\sqrt{t}$ and $\mu_k=\a^k$. It is clear that
$\phi(0)=0$ and $\phi$ is strictly increasing, and moreover
\eqref{Tk2} implies
$$
\|S^k(\mathbb{X})-S^k(\mathbb{X})\|\leq
\|I\mathbb{X}-I\mathbb{Y}\|+\mu_k\phi(\|I\mathbb{X}-I\mathbb{Y}\|)
$$
that $S$ is a totaly asymptotically  $I$-nonexpansive mapping. Here
$I$ is the identity mapping of $\br\times\ell_1$.

Now we are going to show that $S$ is not asymptotically
nonexpansive. Namely, we will establish that for any sequence of
positive numbers $\{\l_n\}$ with $\l_n\to 0$ and any $k\in\bn$ one
can find $\mathbb{X}_0,\mathbb{Y}_0$ such that
$$
\|S^k(\mathbb{X}_0)-S^k(\mathbb{Y}_0)\|>(1+\l_k)\|\mathbb{X}_0-\mathbb{Y}_0\|.
$$
In fact, choose $\mathbb{X}_0$, $\mathbb{Y}_0$ as follows:
$$
\mathbb{X}_0=(0,\xb_0), \ \ \mathbb{Y}_0=(0,\yb_0),
$$
where
$$
\xb_0=(x_0,0,\dots,0,\dots), \ \
\yb_0=(\frac{x_0}{4},0,\dots,0,\dots)
$$
and
\begin{equation}\label{x0}
 0<x_0<\frac{4\a^{2k}}{9(1+\l_k)^2}.
\end{equation}

From \eqref{Tk} one finds
\begin{eqnarray*}
\|S^k(\mathbb{X}_0)-S^k(\mathbb{Y}_0)\|&=&
\a^k\big|\sqrt{x_0}-\frac{\sqrt{x_0}}{2}\big|=\a^k\frac{\sqrt{x_0}}{2},\\
\|\mathbb{X}_0-\mathbb{Y}_0\|&=&\frac{3x_0}{4}.
\end{eqnarray*}
The last equalities with \eqref{x0} imply that
\begin{eqnarray*}
\frac{\|S^k(\mathbb{X}_0)-S^k(\mathbb{Y}_0)\|}{\|\mathbb{X}_0-\mathbb{Y}_0\|}=
\frac{2\a^k}{3\sqrt{x_0}}>1+\l_k.
\end{eqnarray*}
This yields the required assertion. Note that $S$ has infinitely
many fixed points in $K$, i.e. $Fix(S)=\{(x,\mathbf{0}):\ \
x\in[0,1]\}$.

{\sc Example 2.} Let us consider the Banach space $\br\times \ell_1$
defined as before, and $f$ be a mapping of a segment $C\subset\br$
to itself, i.e. $f:C\to C$ with $f(0)=0$ and
$$
|f^n(x)-f^n(y)|\leq |x-y|+c_n, \ \ c_n>0, \ \ n\in\bn
$$
where $c_n\to 0$. Note that such kind of functions do exist. One can
take (see for more details \cite{[KimKim]}) $C=[-1/\pi,1/\pi]$ and
$$
f_\kappa(x)= \left\{
\begin{array}{ll}
\kappa x\sin\frac{1}{x}, \ \ x\neq 0\\[2mm]
0, \ \ x=0.
\end{array}
\right. \ \ \kappa\in(0,1).
$$

 Define a new mapping $S_f: C\times B_1\to C\times B_1$ by
$$
S_f(x,\xb)=(f(x),T_\a(\xb)),
$$
here $T$ is defined as above (see \eqref{T}). Using the same
argument as above Example 1, we can establish that
$$
\|S^k_f(\mathbb{X})-S^k_f(\mathbb{X})\|\leq
\|\mathbb{X}-\mathbb{Y}\|+\mu_k\phi(\|\mathbb{X}-\mathbb{Y}\|)+c_k \
\ \forall k\in\bn.
$$
Moreover, such a mapping is not asymptotically nonexpansive. Note
that the mapping $S_{f_\kappa}$ with the function $f_\kappa$ has a
unique fixed point in $C\times B_1$.

{\bf Remark.} To the best our knowledge, we should stress that the
constructed examples are currently only unique examples of totaly
asymptotically nonxpansive mappings which are not asymptotically
nonxpansive. Before, no such examples were known in the literature.\\

 Aim of the present paper is unification of all know iterative
methods by introducing a new iterative scheme for approximation of
common fixed points of finite families of total asymptotically
$I$-nonexpansive mappings. Note that such a scheme contains as a
particular case of the method introduced in
\cite{[ChidumeOfoedu2009]}, and allow us to construct more simpler
methods than \cite{[ChidumeOfoedu],[ChidumeOfoedu2009]}.

Namely, let $K$ be a nonempty closed convex subset of a real Banach
space $X$ and $\{T_i\}_{i=1}^m:K\to K$ be a finite family of total
asymptotically $I_i-$nonexpansive mappings, i.e.
\begin{eqnarray}\label{defofTnxTny}
\|T_i^nx-T_i^ny\|\le\|I_i^nx-I_i^ny\|+\mu_{in}\phi_i(\|I_i^nx-I_i^ny\|)+\lambda_{in}
\end{eqnarray} and $\{I_i\}_{i=1}^m:K\to K$ be a  finite family of total asymptotically nonexpansive
mappings, i.e.
\begin{eqnarray}
\|I_i^nx-I_i^ny\|\le\|x-y\|+\tilde{\mu}_{in}\varphi_i(\|x-y\|)+\tilde{\lambda}_{in},
\end{eqnarray} here
$\phi_i, \varphi_i:{\mathbb{R}}^{+}\to{\mathbb{R}}^{+}$ are the
strictly increasing continuous functions with
$\phi_i(0)=\varphi_i(0)=0$ for all $i=\overline{1,m}$ and
$\{\mu_{in}\}_{n=1}^\infty, \{\lambda_{in}\}_{n=1}^\infty,
\{\tilde\mu_{in}\}_{n=1}^\infty,
\{\tilde\lambda_{in}\}_{n=1}^\infty$ are nonnegative real sequences
with $\mu_{in},\lambda_{in},\tilde\mu_{in},\tilde\lambda_{in}\to 0$
as $n\to\infty$ for all $i=\overline{1,m}.$ Then for given sequences
$\{\alpha_{jn}\}_{n=1}^\infty,$ $\{\beta_{jn}\}_{n=1}^\infty$ in
$(0,1),$ where $j=\overline{0,m},$ we shall consider the following
explicit iterative process:
\begin{eqnarray}\label{explicitmap}
\left\{ \begin{array}{ccc}
          x_0\in K,\\
          x_{n+1} =  \alpha_{0n}x_{n}+\sum\limits_{i=1}^m\alpha_{in} T_i^ny_n \\
          y_n =  \beta_{0n}x_n+\sum\limits_{i=1}^m\beta_{in} I_i^nx_n.
        \end{array}\right.
\end{eqnarray}
such that $\sum\limits_{j=0}^m\alpha_{jn}=1$ and $\sum\limits_{j=0}^m\beta_{jn}=1.$

Chidume et.al. \cite{[ChidumeOfoedu2009]} has considered only a
particular case of the explicit iterative process
\eqref{explicitmap}, in which $\{I_i\}_{i=1}^m$ to be taken as the
identity mappings. One of the main results of
\cite{[ChidumeOfoedu2009]} (see Theorem 3.5. p.11) was correct while
the provided proof of that result was wrong. Since, in their proof
they used Lemma \ref{convexxnynwithtn}, but which actually is not
applicable in that situation, since the sequence
$\{t_n\}_{n=1}^\infty$ tends to $0.$ As a counterexample, we can
consider the following one: let $x\in X,$ $\|x\|=d>0,$ and let the
sequences $x_n, y_n, t_n$ be defined as follows
$$x_n=x,\quad y_n=-x, \quad t_n=\frac{1}{n}, \quad \forall n\in
{\mathbb{N}}.$$ It is then clear that
$$\lim\limits_{n\to\infty}\|t_nx_n+(1-t_n)y_n\|=
\|x\|\lim\limits_{n\to\infty}\left|1-\frac{2}{n}\right|=d.$$
However, $$\lim\limits_{n\to\infty}\|x_n-y_n\|=2d>0.$$

In this paper, we shall provide a correct proof of Theorem 3.5 p.11
in \cite{[ChidumeOfoedu2009]}. As we already mentioned Lemma
\ref{convexxnynwithtn} is not applicable the main result of
\cite{[ChidumeOfoedu2009]}. Therefore, we first will generalize
Lemma \ref{convexxnynwithtn} to the case of finite number of
sequences. Such a generalization gives us a possibility to prove the
mentioned result. On other hand, the provided generalization
presents an independent interest as well. Moreover, we extend and
unify the main result of \cite{[ChidumeOfoedu2009]} for a finite
family of total asymptotically $I_i-$nonexpansive mappings
$\{T_i\}_{i=1}^m.$ Namely, we shall prove the strong convergence of
the explicit iterative process \eqref{explicitmap} to a common fixed
point of the finite family of total asymptotically
$I_i-$nonexpansive mappings $\{T_i\}_{i=1}^m$ and the finite family
of total asymptotically nonexpansive mappings $\{I_i\}_{i=1}^m.$
Here, we stress that Lemmas \ref{generofZeidler} and
\ref{generofShu} play a crucial role. All presented results here
extend, generalize, unify and improve the corresponding main results
of \cite{[AlberChidumeZegeye]}, \cite{[ChidumeOfoedu2009]},
\cite{Gu}, \cite{T},\cite{T1},\cite{[Sahu]}, \cite{[ShahzadUdomene],
\cite{MS1}}.

\section{Preliminaries}

Throughout this paper, we always assume that $X$ is a real Banach
space. The following lemmas play an important role in proving our
main results.

\begin{lemma}\cite{[TanXu]}\label{convergean}
Let $\{a_n\}, \{b_n\}, \{c_n\}$ be three sequences of nonnegative
real numbers with $\sum\limits_{n=1}^{\infty}b_n<\infty,$
$\sum\limits_{n=1}^{\infty}c_n<\infty.$ If the following condition
is satisfied
\begin{itemize}
    \item[(i)] $a_{n+1}\le(1+b_n)a_n+c_n, \ \ \ n\ge 1.$
\end{itemize}
then the limit $\lim\limits_{n\to\infty}a_n$ exists.
\end{lemma}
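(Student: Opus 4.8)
The plan is to reduce the recursive inequality to a monotone sequence by dividing out the accumulated multiplicative factors. First I would introduce the partial products $P_n=\prod_{k=1}^{n-1}(1+b_k)$ (with $P_1=1$). Since $b_k\ge 0$ and $\sum_{k=1}^\infty b_k<\infty$, the elementary bound $1+b_k\le e^{b_k}$ gives $1\le P_n\le \exp\!\big(\sum_{k=1}^\infty b_k\big)<\infty$, and the sequence $\{P_n\}$ is nondecreasing and bounded above, hence convergent to a finite limit $P_\infty\in[1,\infty)$. The crucial point I must record here is that $P_\infty\ge 1>0$, so that dividing by $P_n$ later is harmless and cannot collapse the sequence to $0$.

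Next I would set $u_n=a_n/P_n$. Dividing the hypothesis $a_{n+1}\le(1+b_n)a_n+c_n$ by $P_{n+1}=(1+b_n)P_n$ and using $P_{n+1}\ge 1$ yields $u_{n+1}-u_n\le c_n/P_{n+1}\le c_n$. Now I would remove the summable increments by defining $v_n=u_n-\sum_{k=1}^{n-1} c_k/P_{k+1}$, so that $v_{n+1}-v_n=(u_{n+1}-u_n)-c_n/P_{n+1}\le 0$; thus $\{v_n\}$ is nonincreasing. Since $u_n\ge 0$ and $\sum_{k=1}^\infty c_k/P_{k+1}\le\sum_{k=1}^\infty c_k<\infty$, the sequence $\{v_n\}$ is bounded below, and a nonincreasing sequence bounded below converges.

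Finally I would reassemble the pieces: $u_n=v_n+\sum_{k=1}^{n-1}c_k/P_{k+1}$ is a sum of two convergent sequences (the second being a convergent series), so $\lim_{n\to\infty}u_n$ exists; and since $a_n=P_n u_n$ with $P_n\to P_\infty\in(0,\infty)$, the limit $\lim_{n\to\infty}a_n=P_\infty\lim_{n\to\infty}u_n$ exists as well.

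I do not expect a serious obstacle: the argument is entirely elementary. The only subtlety, which I would be careful to verify, is precisely the positivity and finiteness of $P_\infty$ coming from $\sum b_n<\infty$; without it the concluding multiplication by $P_n$ could fail to preserve convergence. An alternative route would iterate the inequality to obtain $a_m\le\big(\prod_{k=n}^{m-1}(1+b_k)\big)a_n+M\sum_{k=n}^{m-1}c_k$ for $m>n$, where $M=\sup_n P_n<\infty$, and then take $\limsup_{m}$ followed by $\liminf_{n}$ to squeeze $\limsup_n a_n\le\liminf_n a_n$; the monotone-sequence route above is cleaner and sidesteps the separate boundedness step.
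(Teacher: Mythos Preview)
Your argument is correct: the reduction to a nonincreasing, bounded-below sequence via the normalization $u_n=a_n/P_n$ with $P_n=\prod_{k<n}(1+b_k)$ works cleanly, and the verification that $P_\infty\in[1,\infty)$ is exactly what makes the final multiplication $a_n=P_nu_n$ legitimate. There is nothing to compare here, since the paper does not supply its own proof; it simply quotes this lemma from Tan and Xu \cite{[TanXu]} and uses it as a black box in the proofs of Lemma~\ref{limexistsxnminusp} and Theorem~\ref{criteria}.
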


\begin{lemma}\cite{[Zeidler]}\label{convexxnynwitht}
Let $X$ be a uniformly convex Banach space and $t\in(0,1).$ Suppose $\{x_n\},$ $\{y_n\}$ are two sequences in $X$ such that
\begin{eqnarray*}
\lim\limits_{n\to\infty}\|tx_n+(1-t)y_n\|=d, \ \ \
\limsup\limits_{n\to\infty}\|x_n\|\le d, \ \ \
\limsup\limits_{n\to\infty}\|y_n\|\le d,
\end{eqnarray*}
holds some $d\ge 0.$ Then $\lim\limits_{n\to\infty}\|x_n-y_n\|=0.$
\end{lemma}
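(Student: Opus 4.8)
The plan is to exploit the defining property of uniform convexity: for every $\varepsilon\in(0,2]$ there is a modulus $\delta(\varepsilon)>0$ such that $\|(u+v)/2\|\le 1-\delta(\varepsilon)$ whenever $\|u\|\le 1$, $\|v\|\le 1$ and $\|u-v\|\ge\varepsilon$. First I would dispose of the degenerate case $d=0$: here the hypotheses force $\limsup_{n}\|x_n\|=\limsup_{n}\|y_n\|=0$, so $\|x_n\|,\|y_n\|\to 0$ and the conclusion is immediate from the triangle inequality. The substance of the lemma is the case $d>0$, which I would treat by contradiction.

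Assume $d>0$ and that $\|x_n-y_n\|$ does not converge to $0$. Then there exist $\varepsilon>0$ and a subsequence $\{n_k\}$ with $\|x_{n_k}-y_{n_k}\|\ge\varepsilon$. I would fix a small $\eta\in(0,1)$; by the two $\limsup$ hypotheses there is $N$ with $\|x_n\|,\|y_n\|\le R:=d+\eta$ for all $n\ge N$. Setting $u_n=x_n/R$ and $v_n=y_n/R$ places both sequences in the unit ball, and along the subsequence $\|u_{n_k}-v_{n_k}\|\ge\varepsilon/R\ge\varepsilon/(d+1)$, so uniform convexity gives $\|(u_{n_k}+v_{n_k})/2\|\le 1-\delta_0$ with $\delta_0:=\delta(\varepsilon/(d+1))>0$ independent of $\eta$; equivalently $\|(x_{n_k}+y_{n_k})/2\|\le R(1-\delta_0)$.

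The decisive step is to transfer this midpoint estimate to the prescribed convex combination $tx+(1-t)y$. Writing $m:=\min\{t,1-t\}>0$ and assuming, after possibly interchanging the roles of $x_n$ and $y_n$, that $t\le 1/2$, one decomposes $t x_{n_k}+(1-t)y_{n_k}=2t\cdot\tfrac12(x_{n_k}+y_{n_k})+(1-2t)y_{n_k}$ and applies the triangle inequality to get $\|t x_{n_k}+(1-t)y_{n_k}\|\le 2tR(1-\delta_0)+(1-2t)R=R(1-2t\delta_0)\le R(1-2m\delta_0)$. Letting $k\to\infty$ along the subsequence yields $d\le R(1-2m\delta_0)=(d+\eta)(1-2m\delta_0)$, and then letting $\eta\to 0^{+}$ gives $d\le d(1-2m\delta_0)$, i.e. $2md\delta_0\le 0$, contradicting $d,m,\delta_0>0$. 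Hence $\|x_n-y_n\|\to 0$.

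I expect the main obstacle to be precisely this transfer from the midpoint to a general convex combination: uniform convexity is phrased only for $(u+v)/2$, so the estimate must be routed through the midpoint via the decomposition above, and the coefficient $m=\min\{t,1-t\}$ has to be kept strictly positive and independent of $n$ for the final inequality to close. A secondary point requiring care is the bookkeeping of the $\limsup$ bounds through the auxiliary parameter $\eta$: it must be arranged so that $\delta_0$ stays bounded away from $0$ uniformly in $\eta$ (which is why I bound $\varepsilon/R$ below by $\varepsilon/(d+1)$), so that the limit $\eta\to 0$ can legitimately be taken at the end.
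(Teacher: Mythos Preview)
Your argument is correct. The treatment of the degenerate case $d=0$ is fine, and in the main case the contradiction is set up properly: the bookkeeping with $\eta$ and the use of the $\eta$-independent lower bound $\varepsilon/(d+1)$ on the rescaled separation are exactly what is needed so that $\delta_0$ stays fixed while $\eta\to 0^{+}$. The decomposition
\[
tx_{n_k}+(1-t)y_{n_k}=2t\cdot\tfrac12(x_{n_k}+y_{n_k})+(1-2t)y_{n_k}
\]
(for $t\le\tfrac12$, the other case being symmetric) is a clean way to pass from the midpoint bound to the general convex combination, and the final chain $d\le(d+\eta)(1-2m\delta_0)\to d(1-2m\delta_0)$ closes the contradiction.

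As for comparison with the paper: there is nothing to compare. The paper does not supply a proof of this lemma; it is quoted verbatim as a preliminary result with a citation to Zeidler's monograph and is used as a black box in the proof of Lemma~\ref{generofZeidler}. Your proposal therefore provides strictly more than the paper does on this point.
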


\begin{lemma}\cite{[Shu]}\label{convexxnynwithtn}
Let $X$ be a uniformly convex Banach space and $b,c$ be two
constants with $0<b<c<1.$ Suppose that $\{t_n\}$ is a sequence in
$[b,c]$ and $\{x_n\},$ $\{y_n\}$ are two sequences in $X$ such that
\begin{eqnarray*}
\lim\limits_{n\to\infty}\|t_nx_n+(1-t_n)y_n\|=d, \ \ \
\limsup\limits_{n\to\infty}\|x_n\|\le d, \ \ \
\limsup\limits_{n\to\infty}\|y_n\|\le d,
\end{eqnarray*}
holds some $d\ge 0.$ Then $\lim\limits_{n\to\infty}\|x_n-y_n\|=0.$
\end{lemma}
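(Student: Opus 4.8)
The plan is to reduce Lemma \ref{convexxnynwithtn} to the fixed-coefficient version, Lemma \ref{convexxnynwitht}, by a subsequence argument. First I would record that the hypotheses $\limsup_{n\to\infty}\|x_n\|\le d$ and $\limsup_{n\to\infty}\|y_n\|\le d$ force both $\{x_n\}$ and $\{y_n\}$ to be bounded, so that $M:=\sup_n\|x_n-y_n\|<\infty$. This boundedness is exactly what will let me replace the varying coefficient $t_n$ by a fixed limit without disturbing convergence to $d$.

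Next I would argue by contradiction. Suppose $\|x_n-y_n\|\not\to0$; then there exist $\varepsilon>0$ and a subsequence $\{n_k\}$ with $\|x_{n_k}-y_{n_k}\|\ge\varepsilon$ for all $k$. Since $\{t_{n_k}\}\subset[b,c]$ is bounded, after passing to a further subsequence (not relabelled) I may assume $t_{n_k}\to t^\ast$ for some $t^\ast\in[b,c]$. The hypothesis $0<b<c<1$ is used precisely here: it guarantees $t^\ast\in(0,1)$, so that Lemma \ref{convexxnynwitht} is genuinely applicable with the fixed parameter $t^\ast$. This is exactly the point at which the choice $t_n=1/n\to0$ in the authors' counterexample would break down, since there the limit of the coefficients escapes $(0,1)$.

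Then I would transfer the convergence of the convex combinations from the varying coefficients to the fixed one via the estimate
$$\bigl|\,\|t^\ast x_{n_k}+(1-t^\ast)y_{n_k}\|-\|t_{n_k}x_{n_k}+(1-t_{n_k})y_{n_k}\|\,\bigr|\le|t^\ast-t_{n_k}|\,\|x_{n_k}-y_{n_k}\|\le M\,|t^\ast-t_{n_k}|,$$
whose right-hand side tends to $0$; here the difference of the two combinations equals $(t^\ast-t_{n_k})(x_{n_k}-y_{n_k})$. Combined with $\|t_{n_k}x_{n_k}+(1-t_{n_k})y_{n_k}\|\to d$, this yields $\|t^\ast x_{n_k}+(1-t^\ast)y_{n_k}\|\to d$. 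Since $\limsup_k\|x_{n_k}\|\le d$ and $\limsup_k\|y_{n_k}\|\le d$ hold automatically for the subsequences, Lemma \ref{convexxnynwitht} applies to $\{x_{n_k}\}$, $\{y_{n_k}\}$ with parameter $t^\ast$ and gives $\|x_{n_k}-y_{n_k}\|\to0$, contradicting $\|x_{n_k}-y_{n_k}\|\ge\varepsilon$. Hence $\|x_n-y_n\|\to0$.

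I expect the only delicate point to be this reduction step, namely ensuring that the limit $t^\ast$ of the extracted coefficients lies strictly inside $(0,1)$ so that the fixed-$t$ lemma is available; all of the uniform-convexity input is absorbed into Lemma \ref{convexxnynwitht}, so no modulus-of-convexity computation is needed. An alternative route, avoiding the subsequence extraction, would be to invoke directly a uniform-convexity inequality of the form $\|t u+(1-t)v\|\le 1-2\min\{t,1-t\}\,\delta(\|u-v\|)$ for unit-ball vectors $u,v$, together with the uniform lower bound $\min\{t_n,1-t_n\}\ge\min\{b,1-c\}>0$; but the subsequence reduction is shorter and keeps the uniform convexity hidden inside the already-cited Lemma \ref{convexxnynwitht}.
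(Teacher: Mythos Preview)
Your argument is correct. Note, however, that the paper does not supply its own proof of Lemma~\ref{convexxnynwithtn}: it is quoted from Schu~\cite{[Shu]} as a known result, so there is no in-paper proof to compare against directly. That said, your subsequence reduction to the fixed-coefficient Lemma~\ref{convexxnynwitht} is exactly the strategy the authors employ in their proof of the $m$-sequence generalization, Lemma~\ref{generofShu}: assume a subsequence along which the conclusion fails, pass to a further subsequence on which the coefficients converge to limits in $(0,1)$, transfer the convergence of the convex combination to the fixed-coefficient version via the boundedness of the sequences, and then invoke the constant-coefficient lemma (there Lemma~\ref{generofZeidler}, here Lemma~\ref{convexxnynwitht}) to obtain a contradiction. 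So your proposal is both correct and in the same spirit as the paper's own method for the generalized statement.
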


\section{Main results}

In this section we shall prove our main results. To formulate ones,
we need some auxiliary results.

First we are going to generalize of Lemmas \ref{convexxnynwitht} and
\ref{convexxnynwithtn} for $m$ number of sequences
$\{z_{in}\}_{n=1}^\infty$ from the uniformly convex Banach space
$X,$ where $i=\overline{1,m}.$

\begin{lemma}\label{generofZeidler}
Let $X$ be a uniformly convex Banach space and $\alpha_i\in(0,1),
i=\overline{1,m}$ be any constants with
$\sum\limits_{i=1}^m\alpha_i=1.$ Suppose $\{z_{in}\}_{n=1}^\infty,
i=\overline{1,m}$ are sequences in $X$ such that
\begin{eqnarray}\label{convexalphaizinwithconst}
\lim\limits_{n\to\infty}\left\|\sum\limits_{i=1}^m\alpha_iz_{in}\right\|=d,
\ \ \ \limsup\limits_{n\to\infty}\|z_{in}\|\le d,\ \ \ \forall
i=\overline{1,m},
\end{eqnarray}
holds some $d\ge 0.$ Then $\lim\limits_{n\to\infty}\|z_{in}\|=d$ and
$\lim\limits_{n\to\infty}\|z_{in}-z_{jn}\|=0$ for any
$i,j=\overline{1,m}.$
\end{lemma}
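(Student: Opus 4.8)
The plan is to argue by induction on the number $m$ of sequences, using the two-sequence statement of Lemma \ref{convexxnynwitht} as the engine. The case $m=1$ is vacuous (then $\alpha_1=1$ and $\lim_n\|z_{1n}\|=d$ is the hypothesis itself), and the case $m=2$ is precisely Lemma \ref{convexxnynwitht} with $t=\alpha_1$. Before starting the induction I would record one elementary observation that will be used repeatedly: once one knows that the mutual differences of the summands vanish, the conclusion $\lim_n\|z_{in}\|=d$ is automatic, because writing the convex combination as $z_{jn}$ plus a difference term shows that $\big|\,\|\sum_i\alpha_i z_{in}\|-\|z_{jn}\|\,\big|$ is dominated by a vanishing quantity. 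Thus the real content is only the statement that the differences tend to zero.

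For the inductive step, assume the lemma for any family of at most $m-1$ sequences and consider $m$ sequences. I would split off the first index and set $w_n:=\frac{1}{1-\alpha_1}\sum_{i=2}^m\alpha_i z_{in}$, so that $\sum_{i=1}^m\alpha_i z_{in}=\alpha_1 z_{1n}+(1-\alpha_1)w_n$. The triangle inequality together with $\sum_{i=2}^m\alpha_i=1-\alpha_1$ gives $\limsup_n\|w_n\|\le\frac{1}{1-\alpha_1}\sum_{i=2}^m\alpha_i\,\limsup_n\|z_{in}\|\le d$. Hence the pair $(z_{1n},w_n)$ satisfies the hypotheses of Lemma \ref{convexxnynwitht} with $t=\alpha_1$, and I conclude $\lim_n\|z_{1n}-w_n\|=0$; combined with the preliminary observation this also yields $\lim_n\|z_{1n}\|=\lim_n\|w_n\|=d$.

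The key move is now that $\lim_n\|w_n\|=d$ rewrites as $\lim_n\big\|\sum_{i=2}^m\frac{\alpha_i}{1-\alpha_1}z_{in}\big\|=d$, where the coefficients $\frac{\alpha_i}{1-\alpha_1}$, $i=\overline{2,m}$, lie in $(0,1)$ and sum to $1$, while $\limsup_n\|z_{in}\|\le d$ still holds. This is exactly the hypothesis of the lemma for the $m-1$ sequences $z_{2n},\dots,z_{mn}$, so the induction hypothesis gives $\lim_n\|z_{in}\|=d$ and $\lim_n\|z_{in}-z_{jn}\|=0$ for all $i,j\in\{2,\dots,m\}$. To close the gap involving the first index, I would note that $w_n-z_{jn}=\frac{1}{1-\alpha_1}\sum_{i=2}^m\alpha_i(z_{in}-z_{jn})$ for any $j\ge 2$, whence $\|w_n-z_{jn}\|\to 0$ and therefore $\|z_{1n}-z_{jn}\|\le\|z_{1n}-w_n\|+\|w_n-z_{jn}\|\to 0$. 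Together with the intra-tail estimates this covers all pairs $i,j$ and finishes the proof.

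I expect the only genuinely delicate point to be the bookkeeping rather than the analysis: one must check that the regrouping again produces a convex combination whose coefficients sum to $1$ and whose auxiliary sequence $w_n$ inherits both the $\limsup$ bound and, crucially, the fact that the norm of its own convex combination converges to the same $d$. All the analytic force is outsourced to Lemma \ref{convexxnynwitht}, so the $m$-fold version requires no uniform-convexity input beyond the two-term case.
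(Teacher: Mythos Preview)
Your proof is correct and follows essentially the same strategy as the paper: induction on $m$, peeling off one summand to form an auxiliary convex combination $w_n$ of the remaining $m-1$ sequences, applying Lemma~\ref{convexxnynwitht} to the resulting pair, and then invoking the inductive hypothesis on the reduced family. The only cosmetic differences are that the paper splits off the last index rather than the first, and it establishes $\lim_n\|z_{in}\|=d$ at the outset via a direct $\liminf$ argument rather than deducing it afterwards from the vanishing of the differences as you do.
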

\begin{proof}
Let us first prove $\lim\limits_{n\to\infty}\|z_{in}\|=d$ for any
$i=\overline{1,m}.$ Indeed, it follows from
\eqref{convexalphaizinwithconst} that
\begin{eqnarray*}
d&=&\lim\limits_{n\to\infty}\left\|\sum\limits_{k=1}^m\alpha_kz_{kn}\right\|
=\liminf\limits_{n\to\infty}\left\|\sum\limits_{k=1}^m\alpha_kz_{kn}\right\|\\
&\le&\liminf\limits_{n\to\infty}\left(\sum\limits_{k=1}^m\alpha_k\|z_{kn}\|\right)
\le\alpha_i\liminf\limits_{n\to\infty}\|z_{in}\|+\sum\limits_{k\neq i}\alpha_k\limsup\limits_{n\to\infty}\|z_{kn}\|\\
&\le&\alpha_i\liminf\limits_{n\to\infty}\|z_{in}\|+(1-\alpha_i)d.
\end{eqnarray*}
We then get that $\liminf\limits_{n\to\infty}\|z_{in}\|\ge d,$ which
means $\lim\limits_{n\to\infty}\|z_{in}\|=d.$

Now we prove the statement
$\lim\limits_{n\to\infty}\|z_{in}-z_{jn}\|=0$ by means of
mathematical induction w.r.t. $m.$ For $m=2,$ the statement
immediately  follows from Lemma \ref{convexxnynwitht}. Assume that
the statement is true, for $m=k-1$. Let us prove for $m=k.$ To do
this, denote
$$t_n=\frac{1}{1-\alpha_k}\sum\limits_{i=1}^{k-1}\alpha_iz_{in}.$$
Since $\frac{1}{1-\alpha_k}\sum\limits_{i=1}^{k-1}\alpha_i=1$ we get
$\limsup\limits_{n\to\infty}\|t_n\|\le d.$ On the other hand, one
has
\begin{eqnarray*}
d&=&\liminf\limits_{n\to\infty}\left\|\sum\limits_{i=1}^k\alpha_iz_{in}\right\|=\liminf\limits_{n\to\infty}\|(1-\alpha_k)t_n+\alpha_kz_{kn}\|\\
&\le& (1-\alpha_k)\liminf\limits_{n\to\infty}\|t_n\|+\alpha_k\limsup\limits_{n\to\infty}\|z_{kn}\|\\
&\le& (1-\alpha_k)\liminf\limits_{n\to\infty}\|t_n\| +\alpha_k d.
\end{eqnarray*}
We then obtain $\liminf\limits_{n\to\infty}\|t_n\|\ge d$ which means
$\lim\limits_{n\to\infty}\|t_n\|=d.$ In this case, according to the
assumption of induction with the sequence $t_n,$  we can conclude
that $\lim\limits_{n\to\infty}\|z_{in}-z_{jn}\|=0,$ if $1\le i,j\le
k-1.$

Since $\lim\limits_{n\to\infty}\|(1-\alpha_k)t_n+\alpha_kz_{kn}\|=d$
due to Lemma \ref{convexxnynwitht} one gets
$$\lim\limits_{n\to\infty}\|t_n-z_{kn}\|=0.$$ If $1\le j\le k-1$
then the following inequality
\begin{eqnarray*}
\|z_{jn}-z_{kn}\|&\le&\|z_{jn}-t_n\|+\|t_n-z_{kn}\|\\
&\le&\frac{1}{1-\alpha_k}\sum\limits_{i=1}^{k-1}\alpha_i\|z_{in}-z_{jn}\|+\|t_n-z_{kn}\|
\end{eqnarray*}
implies that $\lim\limits_{n\to\infty}\|z_{jn}-z_{kn}\|=0.$ This
completes the proof.
\end{proof}

\begin{lemma}\label{generofShu}
Let $X$ be a uniformly convex Banach space and
$\alpha_{*},\alpha^{*}$ be two constants with
$0<\alpha_{*}<\alpha^{*}<1.$ Suppose that
$\{\alpha_{in}\}_{n=1}^{\infty}\subset[\alpha_{*},\alpha^{*}],$
$i=\overline{1,m}$ are any sequences with
$\sum\limits_{i=1}^m\alpha_{in}=1$ for all $n\in \bn.$ Suppose
$\{z_{in}\}_{n=1}^\infty, i=\overline{1,m}$ are sequences in $X$
such that
\begin{eqnarray}\label{convexalphaizinwithsequn}
\lim\limits_{n\to\infty}\left\|\sum\limits_{i=1}^m\alpha_{in}z_{in}\right\|=d,
\ \ \ \limsup\limits_{n\to\infty}\|z_{in}\|\le d, \ \ \forall
i=\overline{1,m},
\end{eqnarray}
holds for some $d\ge 0.$ Then $\lim\limits_{n\to\infty}\|z_{in}\|=d$
and $\lim\limits_{n\to\infty}\|z_{in}-z_{jn}\|=0$ for any
$i,j=\overline{1,m}.$  \end{lemma}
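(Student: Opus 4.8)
The plan is to reduce this statement, in which the coefficients $\alpha_{in}$ depend on $n$, to the already established Lemma \ref{generofZeidler}, which treats constant coefficients. The bridge is a standard subsequence argument combined with the compactness of the coefficient range $[\alpha_{*},\alpha^{*}]^m$, exactly the device that passes from Lemma \ref{convexxnynwitht} to Lemma \ref{convexxnynwithtn}.

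First I would record that all the sequences $\{z_{in}\}$ are bounded: since $\limsup_{n\to\infty}\|z_{in}\|\le d<\infty$ for each $i=\overline{1,m}$, there is a constant $M$ with $\|z_{in}\|\le M$ for all $i=\overline{1,m}$ and all sufficiently large $n$. Next I would invoke the elementary subsequence principle: a real sequence converges to a limit $\ell$ if and only if every subsequence possesses a further subsequence converging to $\ell$. Thus it suffices to show that every subsequence of indices admits a further subsequence along which $\|z_{in}\|\to d$ and $\|z_{in}-z_{jn}\|\to 0$.

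Fix then an arbitrary subsequence $\{n_l\}$. The vectors $(\alpha_{1n_l},\dots,\alpha_{mn_l})$ lie in the compact cube $[\alpha_{*},\alpha^{*}]^m$, so by the Bolzano--Weierstrass theorem there is a further subsequence $\{n_{l_p}\}$ with $\alpha_{in_{l_p}}\to\alpha_i$ for every $i=\overline{1,m}$; the limits satisfy $\alpha_i\in[\alpha_{*},\alpha^{*}]\subset(0,1)$ and $\sum_{i=1}^m\alpha_i=1$, so they form an admissible set of constants for Lemma \ref{generofZeidler}. I would then replace the varying coefficients by these constants and control the error:
\begin{eqnarray*}
\left\|\sum_{i=1}^m\alpha_iz_{in_{l_p}}-\sum_{i=1}^m\alpha_{in_{l_p}}z_{in_{l_p}}\right\|
\le\sum_{i=1}^m|\alpha_i-\alpha_{in_{l_p}}|\,\|z_{in_{l_p}}\|
\le M\sum_{i=1}^m|\alpha_i-\alpha_{in_{l_p}}|\longrightarrow 0.
\end{eqnarray*}
Since $\lim_{n\to\infty}\|\sum_{i=1}^m\alpha_{in}z_{in}\|=d$, the same limit holds along $\{n_{l_p}\}$, and the estimate above yields $\lim_{p\to\infty}\|\sum_{i=1}^m\alpha_iz_{in_{l_p}}\|=d$. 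Because also $\limsup_{p\to\infty}\|z_{in_{l_p}}\|\le d$, the hypotheses of Lemma \ref{generofZeidler} are met by the sequences $\{z_{in_{l_p}}\}_{p=1}^\infty$ with the constants $\alpha_i$; it gives $\lim_{p\to\infty}\|z_{in_{l_p}}\|=d$ and $\lim_{p\to\infty}\|z_{in_{l_p}}-z_{jn_{l_p}}\|=0$ for all $i,j=\overline{1,m}$. Applying the subsequence principle then settles both assertions.

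The main obstacle is precisely the $n$-dependence of the coefficients, which forbids a direct appeal to Lemma \ref{generofZeidler}; it is overcome by passing to a subsequence on which the coefficients converge, together with the uniform bound $M$ that makes the coefficient-replacement error vanish. One must check that the limiting constants remain strictly inside $(0,1)$, which is guaranteed by the two-sided bound $\alpha_{*}\le\alpha_{in}\le\alpha^{*}$; this is exactly why the hypothesis $0<\alpha_{*}<\alpha^{*}<1$ is imposed rather than merely $\alpha_{in}\in(0,1)$, and it is the only place where uniform boundedness away from the endpoints is used.
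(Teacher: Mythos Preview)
Your proof is correct and follows essentially the same route as the paper: pass to a subsequence on which the coefficients converge (using compactness of $[\alpha_*,\alpha^*]^m$), control the coefficient-replacement error via boundedness of the $z_{in}$, and then invoke Lemma~\ref{generofZeidler}. The only cosmetic difference is that the paper frames the second conclusion as a proof by contradiction (assuming $\limsup_n\|z_{i_0n}-z_{j_0n}\|>0$ and extracting a subsequence realizing this limsup), whereas you use the equivalent ``every subsequence has a convergent sub-subsequence'' principle directly; the paper also obtains $\lim_n\|z_{in}\|=d$ by repeating the $\liminf$ argument of Lemma~\ref{generofZeidler} verbatim rather than through the subsequence device, but this is immaterial.
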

\begin{proof}
Analogously as in the proof of Lemma \ref{generofZeidler}, it is
easy to show that $\lim\limits_{n\to\infty}\|z_{in}\|=d.$ Therefore,
let us prove the statement
$\lim\limits_{n\to\infty}\|z_{in}-z_{jn}\|=0$ for any
$i,j=\overline{1,m}.$ Suppose contrary, i.e., there exist two
numbers $i_0,j_0$ such that
$$
\limsup\limits_{n\to\infty}\|z_{i_0n}-z_{j_0n}\|=\beta_{i_0,j_0}>0.$$
Then, there exists a subsequence
$\{z_{i_0n_k}-z_{j_0n_k}\}_{k=1}^\infty$ of
$\{z_{i_0n}-z_{j_0n}\}_{n=1}^\infty$ such that
$\lim\limits_{k\to\infty}\|z_{i_0n_k}-z_{j_0n_k}\|=\beta_{i_0j_0}.$

Let us consider the subsequences $\{\alpha_{in_k}\}_{k=1}^{\infty}$
of $\{\alpha_{in}\}_{n=1}^{\infty},$ here $i=\overline{1,m}.$ Since
$\{\alpha_{in_k}\}_{k=1}^{\infty}\subset[\alpha_{*},\alpha^{*}]$
there exist a subsequence $\{n_{k_l}\}_{l=1}^\infty$ of
$\{n_{k}\}_{k=1}^\infty$ such that
$\lim\limits_{l\to\infty}\alpha_{in_{k_l}}=\alpha_i$ for all
$i=\overline{1,m}.$ Since $\sum\limits_{i=1}^m\alpha_{in}=1$, for
all $n\in \bn$, one gets $\sum\limits_{i=1}^m\alpha_{i}=1,$ and
$\alpha_i\in[\alpha_{*},\alpha^{*}],$ for all $i=\overline{1,m}.$ We
know that
\begin{eqnarray*}
d&=&\lim\limits_{l\to\infty}\left\|\sum\limits_{i=1}^m\alpha_{in_{k_l}}z_{in_{k_l}}\right\|
=\liminf\limits_{l\to\infty}\left\|\sum\limits_{i=1}^m\left((\alpha_{in_{k_l}}-\alpha_{i})z_{in_{k_l}}+\alpha_{i}z_{in_{k_l}}\right)\right\|\\
&\le&\liminf\limits_{l\to\infty}\left(\sum\limits_{i=1}^m|\alpha_{in_{k_l}}-\alpha_{i}|\|z_{in_{k_l}}\|+
\left\|\sum\limits_{i=1}^m\alpha_{i}z_{in_{k_l}}\right\|\right)\\
&\le&\sum\limits_{i=1}^m\limsup\limits_{l\to\infty}\left(|\alpha_{in_{k_l}}-\alpha_{i}|\|z_{in_{k_l}}\|\right)+
\liminf\limits_{l\to\infty}\left\|\sum\limits_{i=1}^m\alpha_{i}z_{in_{k_l}}\right\|\\
\end{eqnarray*}
It then follows that
$\liminf\limits_{l\to\infty}\left\|\sum\limits_{i=1}^m\alpha_{i}z_{in_{k_l}}\right\|\ge
d.$ On the other hand, we have
$$\limsup\limits_{l\to\infty}\left\|\sum\limits_{i=1}^m\alpha_{i}z_{in_{k_l}}\right\|
\le
\sum\limits_{i=1}^m\alpha_{i}\limsup\limits_{l\to\infty}\left\|z_{in_{k_l}}\right\|\le
d.$$ Therefore,
$\lim\limits_{l\to\infty}\left\|\sum\limits_{i=1}^m\alpha_{i}z_{in_{k_l}}\right\|=d.$
Consequently, Lemma \ref{generofZeidler} implies that
$\lim\limits_{l\to\infty}\|z_{i_0n_{k_l}}-z_{j_0n_{k_l}}\|=0.$
However, it contradicts to
$$\lim\limits_{l\to\infty}\|z_{i_0n_{k_l}}-z_{j_0n_{k_l}}\|=\lim\limits_{k\to\infty}\|z_{i_0n_k}-z_{j_0n_k}\|=\beta_{i_0j_0}>0.$$ This completes the proof.
\end{proof}

\begin{proposition}\label{extimationofTnxTny}
Let $X$ be a real Banach space and $K$ be a nonempty closed convex
subset of $X.$ Let $\{T_i\}_{i=1}^m:K\to K$ be a finite family of
total asymptotically $I_i-$nonexpansive mappings with  sequences
$\{\mu_{in}\}_{n=1}^{\infty}, \{\lambda_{in}\}_{n=1}^{\infty},$
where $i=\overline{1,m},$ and $\{I_i\}_{i=1}^{m}:K\to K$ be a finite
family total asymptotically nonexpansive mappings with sequences
$\{\tilde\mu_{in}\}_{n=1}^{\infty},
\{\tilde\lambda_{in}\}_{n=1}^{\infty},$ where $i=\overline{1,m}.$
Suppose that there exist $M_i,M_i^{*},N_i,N_i^{*}>0,$
$i=\overline{1,m}$ such that $\phi_i(\xi_i)\le M_i^{*}\xi_i,$ for
all $\xi_i\ge M_i$ and $\varphi_i(\zeta_i)\le N_i^{*}\zeta_i$ for
all $\zeta_i\ge N_i,$ where $i=\overline{1,m}.$ Then the following
holds for any $x,y\in K$ and for any $i=\overline{1,m},$
\begin{eqnarray}
\label{inequalityforInxIny}\|I_i^nx-I_i^ny\|&\le&
(1+\tilde\mu_{in}N_i^{*})\|x-y\|+\tilde\mu_{in}\varphi_i(N_i)+\tilde\lambda_{in},\\
\label{inequalityforTnxTny}\|T_i^nx-T_i^ny\|&\le&
(1+\mu_{in}M_i^{*})(1+\tilde\mu_{in}N_i^{*})\|x-y\|+\tilde\mu_{in}(1+\mu_{in}M_i^{*})\varphi_i(N_i)\nonumber\\
&&+\tilde\lambda_{in}(1+\mu_{in}M_i^{*})+\mu_{in}\phi_i(M_i)+\lambda_{in},
\end{eqnarray}
\end{proposition}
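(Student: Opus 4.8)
The plan is to linearize the growth functions $\varphi_i$ and $\phi_i$ by exploiting the hypotheses $\varphi_i(\zeta)\le N_i^{*}\zeta$ (for $\zeta\ge N_i$) and $\phi_i(\xi)\le M_i^{*}\xi$ (for $\xi\ge M_i$) together with the monotonicity of these functions, via a simple dichotomy on the size of the argument. First I would establish \eqref{inequalityforInxIny}. Starting from the defining inequality of a total asymptotically nonexpansive mapping, $\|I_i^nx-I_i^ny\|\le\|x-y\|+\tilde\mu_{in}\varphi_i(\|x-y\|)+\tilde\lambda_{in}$, I split into two cases. If $\|x-y\|\ge N_i$, the hypothesis gives $\varphi_i(\|x-y\|)\le N_i^{*}\|x-y\|$, so the middle term is bounded by $\tilde\mu_{in}N_i^{*}\|x-y\|$; adjoining the nonnegative quantity $\tilde\mu_{in}\varphi_i(N_i)$ on the right only weakens the bound, which yields the claim. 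If instead $\|x-y\|<N_i$, then since $\varphi_i$ is increasing we have $\varphi_i(\|x-y\|)\le\varphi_i(N_i)$, and adjoining the nonnegative term $\tilde\mu_{in}N_i^{*}\|x-y\|$ again gives the claim. Either way \eqref{inequalityforInxIny} holds.

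For \eqref{inequalityforTnxTny} I would repeat exactly the same dichotomy, this time applied to $\phi_i$ with threshold $M_i$ and with argument $\|I_i^nx-I_i^ny\|$. The total asymptotically $I_i$-nonexpansiveness inequality $\|T_i^nx-T_i^ny\|\le\|I_i^nx-I_i^ny\|+\mu_{in}\phi_i(\|I_i^nx-I_i^ny\|)+\lambda_{in}$ then produces the intermediate estimate $\|T_i^nx-T_i^ny\|\le(1+\mu_{in}M_i^{*})\|I_i^nx-I_i^ny\|+\mu_{in}\phi_i(M_i)+\lambda_{in}$. Substituting the already-established bound \eqref{inequalityforInxIny} for $\|I_i^nx-I_i^ny\|$ and expanding the product $(1+\mu_{in}M_i^{*})\bigl[(1+\tilde\mu_{in}N_i^{*})\|x-y\|+\tilde\mu_{in}\varphi_i(N_i)+\tilde\lambda_{in}\bigr]$ collects precisely the four terms displayed in \eqref{inequalityforTnxTny}.

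The argument is essentially bookkeeping; there is no genuine analytic obstacle, since the linearizing constants $M_i,M_i^{*},N_i,N_i^{*}$ are furnished by hypothesis. The only point requiring care is the direction of the inequalities when merging the two cases into a single uniform bound: in the large-argument case one must adjoin back a nonnegative $\tilde\mu_{in}\varphi_i(N_i)$ (resp.\ $\mu_{in}\phi_i(M_i)$) term, while in the small-argument case one must adjoin back a nonnegative linear term, so that one formula covers both alternatives. I would also note that all four dropped-in quantities are manifestly nonnegative because $\mu_{in},\lambda_{in},\tilde\mu_{in},\tilde\lambda_{in}\ge 0$ and $\varphi_i,\phi_i\ge 0$, which is exactly what keeps each estimate pointing in the correct direction.
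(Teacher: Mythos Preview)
Your proposal is correct and follows essentially the same approach as the paper. The only cosmetic difference is that the paper packages your two-case dichotomy into a single global inequality $\varphi_i(\zeta)\le \varphi_i(N_i)+N_i^{*}\zeta$ (and likewise $\phi_i(\xi)\le \phi_i(M_i)+M_i^{*}\xi$) valid for all $\zeta,\xi\ge 0$, and then substitutes this directly into the defining inequalities; your case split and subsequent ``adjoining of nonnegative terms'' is precisely how one verifies that global bound.
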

\begin{proof}
Since $\phi_i, \varphi_i:{\mathbb{R}}^{+}\to{\mathbb{R}}^{+}$ are
the strictly increasing continuous functions, where
$i=\overline{1,m},$ it follows that $\phi_i(\xi_i)\le\phi_i(M_i)$
and $\varphi_i(\zeta_i)\le\phi_i(N_i)$ whenever $\xi_i\le M_i$ and
$\zeta_i\le N_i,$ where $i=\overline{1,m}.$ By hypothesis of
Proposition \ref{extimationofTnxTny}, for all $\xi_i, \zeta_i\ge 0$
and $i=\overline{1,m},$ we then get
\begin{eqnarray}
\label{inqualityforphi}\phi_i(\xi_i)\le \phi_i(M_i)+M_i^{*}\xi_i,\\
\label{inequalityforvarphi}\varphi_i(\zeta_i) \le
\varphi_i(N_i)+N_i^{*}\zeta_i.
\end{eqnarray}
Since $\{T_i\}_{i=1}^m:K\to K$, $\{I_i\}_{i=1}^m:K\to K$ are  a
total asymptotically $I_i-$nonexpansive and a total asymptotically
nonexpansive mappings, respectively, from \eqref{inqualityforphi}
and \eqref{inequalityforvarphi} one gets
\begin{eqnarray*}
\|I_i^nx-I_i^ny\|&\le&
\|x-y\|+\tilde\mu_{in}\varphi_i(\|x-y\|)+\tilde\lambda_{in}\\
&\le&
\|x-y\|+\tilde\mu_{in}(\varphi_i(N_i)+N_i^{*}\|x-y\|)+\tilde\lambda_{in}\\
&=&
(1+\tilde\mu_{in}N_i^{*})\|x-y\|+\tilde\mu_{in}\varphi_i(N_i)+\tilde\lambda_{in}
\end{eqnarray*}
and
\begin{eqnarray*}
\|T_i^nx-T_i^ny\|&\le&
\|I_i^nx-I_i^ny\|+\mu_{in}\phi_i(\|I_i^nx-I_i^ny\|)+\lambda_{in}\\
&\le&
\|I_i^nx-I_i^ny\|+\mu_{in}(\phi_i(M_i)+M_i^{*}\|I_i^nx-I_i^ny\|)+\lambda_{in}\\
&=& (1+\mu_{in}M_i^{*})\|I_i^nx-I_i^ny\|+\mu_{in}\phi_i(M_i)+\lambda_{in}\\
&\le&
(1+\mu_{in}M_i^{*})(1+\tilde\mu_{in}N_i^{*})\|x-y\|+\tilde\mu_{in}(1+\mu_{in}M_i^{*})\varphi_i(N_i)\\
&&+\tilde\lambda_{in}(1+\mu_{in}M_i^{*})+\mu_{in}\phi_i(M_i)+\lambda_{in}
\end{eqnarray*}
\end{proof}

\begin{lemma}\label{limexistsxnminusp}
Let $X$ be a uniformly convex real Banach space and $K$ be a
nonempty closed convex subset of $X.$ Let $\{T_i\}_{i=1}^m:K\to K$
be a finite family of total asymptotically $I_i-$nonexpansive
mappings with sequences $\{\mu_{in}\}_{n=1}^{\infty},
\{\lambda_{in}\}_{n=1}^{\infty},$ where $i=\overline{1,m},$ and
$\{I_i\}_{i=1}^{m}:K\to K$ be a finite family total asymptotically
nonexpansive mappings with sequences
$\{\tilde\mu_{in}\}_{n=1}^{\infty},
\{\tilde\lambda_{in}\}_{n=1}^{\infty},$ where $i=\overline{1,m},$
such that $F:=\bigcap\limits_{i=1}^m\left(F(T_i)\cap
F(I_i)\right)\neq\emptyset.$ Suppose
$\sum\limits_{n=1}^\infty\mu_{in} < \infty$,
$\sum\limits_{n=1}^\infty\lambda_{in} < \infty,$
$\sum\limits_{n=1}^\infty\tilde\mu_{in} < \infty,$
$\sum\limits_{n=1}^\infty\tilde\lambda_{in} < \infty$ for all
$i=\overline{1,m}$ and there exist $M_i,M_i^{*},N_i,N_i^{*}>0,$
$i=\overline{1,m}$ such that $\phi_i(\xi_i)\le M_i^{*}\xi_i,$ for
all $\xi_i\ge M_i$ and $\varphi_i(\zeta_i)\le N_i^{*}\zeta_i$ for
all $\zeta_i\ge N_i,$ where $i=\overline{1,m}.$ If $\{x_n\}$ is the
explicit iterative sequence defined by \eqref{explicitmap} then for
each $p\in F$ the limit $\lim\limits_{n\to\infty}\|x_n-p\|$ exists.
\end{lemma}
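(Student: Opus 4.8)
The plan is to show that the sequence $a_n := \|x_n - p\|$ satisfies a recursive inequality of the form $a_{n+1} \le (1+b_n)a_n + c_n$ with $\sum b_n < \infty$ and $\sum c_n < \infty$, and then invoke Lemma \ref{convergean} to conclude that $\lim_{n\to\infty} a_n$ exists. Since $p \in F$ is a common fixed point, we have $T_i^n p = p$ and $I_i^n p = p$ for all $i$ and all $n$, so the estimates from Proposition \ref{extimationofTnxTny} apply with $y = p$: namely $\|I_i^n x_n - p\| \le (1+\tilde\mu_{in}N_i^{*})\|x_n - p\| + \tilde\mu_{in}\varphi_i(N_i) + \tilde\lambda_{in}$, and a similar (compounded) estimate for $\|T_i^n y_n - p\|$.

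First I would estimate $\|y_n - p\|$. Writing $y_n - p = \beta_{0n}(x_n - p) + \sum_{i=1}^m \beta_{in}(I_i^n x_n - p)$ using $\sum_{j=0}^m \beta_{jn} = 1$, the triangle inequality together with the bound \eqref{inequalityforInxIny} gives
\begin{eqnarray*}
\|y_n - p\| \le \Big(1 + \sum_{i=1}^m \beta_{in}\tilde\mu_{in}N_i^{*}\Big)\|x_n - p\| + \sum_{i=1}^m \beta_{in}\big(\tilde\mu_{in}\varphi_i(N_i) + \tilde\lambda_{in}\big).
\end{eqnarray*}
Since $\beta_{in} \in (0,1)$, this is of the form $\|y_n - p\| \le (1 + u_n)\|x_n - p\| + v_n$ where $u_n = \sum_i \tilde\mu_{in}N_i^{*}$ and $v_n = \sum_i (\tilde\mu_{in}\varphi_i(N_i) + \tilde\lambda_{in})$, and both $\sum u_n < \infty$ and $\sum v_n < \infty$ follow from the summability hypotheses on $\{\tilde\mu_{in}\}$ and $\{\tilde\lambda_{in}\}$.

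Next I would estimate $\|x_{n+1} - p\|$ in the same way: decompose $x_{n+1} - p = \alpha_{0n}(x_n - p) + \sum_{i=1}^m \alpha_{in}(T_i^n y_n - p)$, apply the triangle inequality, and bound each $\|T_i^n y_n - p\|$ by \eqref{inequalityforTnxTny} (with $x = y_n$). This replaces $\|x_n-p\|$ by the already-obtained bound on $\|y_n - p\|$, yielding after multiplying out $\|x_{n+1} - p\| \le (1 + b_n)\|x_n - p\| + c_n$, where $b_n$ is a finite sum of products of the (summable, hence bounded) sequences $\mu_{in}, \tilde\mu_{in}$ with the constants $M_i^{*}, N_i^{*}$, and $c_n$ collects the remaining additive terms built from $\varphi_i(N_i), \phi_i(M_i), \lambda_{in}, \tilde\lambda_{in}$. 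The main bookkeeping obstacle — and the only real work here — is verifying that $\sum_n b_n < \infty$ and $\sum_n c_n < \infty$: one expands the product $(1+\mu_{in}M_i^{*})(1+\tilde\mu_{in}N_i^{*}) = 1 + (\mu_{in}M_i^{*} + \tilde\mu_{in}N_i^{*}) + \mu_{in}\tilde\mu_{in}M_i^{*}N_i^{*}$, and since the $\mu_{in},\tilde\mu_{in}$ are bounded (being convergent to $0$), every term in $b_n$ is dominated by a constant multiple of $\mu_{in}$ or $\tilde\mu_{in}$, whose sums are finite by hypothesis; similarly every term of $c_n$ is dominated by a constant multiple of one of the four summable sequences. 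Once $\sum b_n < \infty$ and $\sum c_n < \infty$ are established, Lemma \ref{convergean} immediately gives the existence of $\lim_{n\to\infty}\|x_n - p\|$, completing the proof.
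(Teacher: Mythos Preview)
Your proposal is correct and follows essentially the same approach as the paper's proof: estimate $\|y_n-p\|$ via the convex decomposition and \eqref{inequalityforInxIny}, feed this into the analogous estimate for $\|x_{n+1}-p\|$ using \eqref{inequalityforTnxTny}, collect terms to obtain $\|x_{n+1}-p\|\le(1+b_n)\|x_n-p\|+c_n$ with $\sum b_n<\infty$ and $\sum c_n<\infty$, and apply Lemma~\ref{convergean}. The paper carries out exactly this computation, writing out the explicit forms of $b_n$ and $c_n$ rather than just arguing their summability abstractly as you do, but the structure is identical.
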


\begin{proof}
Since $F\neq\emptyset,$ for any given $p\in F,$ it
follows from \eqref{explicitmap} and \eqref{inequalityforTnxTny}
that
\begin{eqnarray}
\|x_{n+1}-p\| &=& \left\|\left(1-\sum\limits_{i=1}^m\alpha_{in}\right)(x_{n}-p)+\sum\limits_{i=1}^m\alpha_{in}(T_i^ny_n-p)\right\|\nonumber\\
&\le& \left(1-\sum\limits_{i=1}^m\alpha_{in}\right)\|x_{n}-p\|+\sum\limits_{i=1}^m\alpha_{in}\|T_i^ny_n-p\|\nonumber\\
&\le& \left(1-\sum\limits_{i=1}^m\alpha_{in}\right)\|x_{n}-p\|\nonumber\\
&&+\sum\limits_{i=1}^m\alpha_{in}
(1+\mu_{in}M_i^{*})(1+\tilde\mu_{in}N_i^{*})\|y_n-p\|\nonumber\\
&&+\sum\limits_{i=1}^m\left(\alpha_{in}\tilde\mu_{in}(1+\mu_{in}M_i^{*})\varphi_i(N_i)
+\alpha_{in}\tilde\lambda_{in}(1+\mu_{in}M_i^{*})\right)\nonumber\\
&&+\sum\limits_{i=1}^m\left(\alpha_{in}\mu_{in}\phi_i(M_i)+\alpha_{in}\lambda_{in}\right)\label{estimationofxnandp}.
\end{eqnarray} Again from \eqref{explicitmap} and \eqref{inequalityforInxIny} we derive that
\begin{eqnarray}
\|y_n-p\| &=& \left\|\left(1-\sum\limits_{i=1}^m\beta_{in}\right)(x_n-p)+\sum\limits_{i=1}^m\beta_{in}(I_i^nx_n-p)\right\|\nonumber\\
&\le& \left(1-\sum\limits_{i=1}^m\beta_{in}\right)\|x_n-p\|+\sum\limits_{i=1}^m\beta_{in}\|I_i^nx_n-p\|\nonumber\\
&=&\left(1+\sum\limits_{i=1}^m\tilde\mu_{in}\beta_{in}N_i^{*}\right)\|x_n-p\|\nonumber\\
&&+\sum\limits_{i=1}^m\left(\tilde\mu_{in}\beta_{in}\varphi_i(N_i)
+\tilde\lambda_{in}\beta_{in}\right)\label{estimationofynandp}
\end{eqnarray}
Then from \eqref{estimationofxnandp} and \eqref{estimationofynandp} one finds
\begin{eqnarray}
\|x_{n+1}-p\|&\le& (1+b_n)\|x_n-p\|+c_n\label{inequalityforxnminusp}
\end{eqnarray}
here
\begin{eqnarray*}
b_n&=&\sum\limits_{i=1}^m\left(\mu_{in}\alpha_{in}M_i^{*}+\tilde\mu_{in}\alpha_{in}N_i^{*}
+\alpha_{in}\sum\limits_{i=1}^m\tilde\mu_{in}\beta_{in}N_i^{*}\right)\\
&&+\sum\limits_{i=1}^m\mu_{in}\tilde\mu_{in}\alpha_{in}M_i^{*}N_i^{*}
+\sum\limits_{i=1}^m\mu_{in}\alpha_{in}M_i^{*}\cdot\sum\limits_{i=1}^m\tilde\mu_{in}\beta_{in}N_i^{*}\\
&&+\sum\limits_{i=1}^m\tilde\mu_{in}\alpha_{in}N_i^{*}\cdot\sum\limits_{i=1}^m\tilde\mu_{in}\beta_{in}N_i^{*},\\
c_n&=&\sum\limits_{i=1}^m(\tilde\mu_{in}\beta_{in}\varphi_i(N_i)+\tilde\lambda_{in}\beta_{in})
\cdot\sum\limits_{i=1}^m\alpha_{in}(1+\mu_{in}M_i^{*})(1+\tilde\mu_{in}N_i^{*})\\
&&+\sum\limits_{i=1}^m\left(\tilde\mu_{in}\alpha_{in}(1+\mu_{in}M_i^{*})\varphi_i(N_i)
+\tilde\lambda_{in}\alpha_{in}(1+\mu_{in}M_i^{*})\right)\\
&&+\sum\limits_{i=1}^m\left(\mu_{in}\alpha_{in}\phi_i(M_i)+\lambda_{in}\alpha_{in}\right)\end{eqnarray*}
Denoting $a_n=\|x_{n}-p\|$ in \eqref{inequalityforxnminusp} one gets
\begin{eqnarray*}
a_{n+1}\le(1+b_n)a_n+c_n.
\end{eqnarray*}
Since $\sum\limits_{n=1}^\infty b_n<\infty$ and
$\sum\limits_{n=1}^\infty c_n<\infty,$ it follows from Lemma
\ref{convergean} the existence of the limit
$\lim\limits_{n\to\infty}a_n$. This means the limit
\begin{eqnarray}\label{limofxnminusp}
\lim\limits_{n\to\infty}\|x_n-p\|=d
\end{eqnarray}
exists, where $d\ge0$ is a constant. This completes the proof.
\end{proof}

Now we prove the following result.

\begin{theorem}\label{criteria}
Let $X$ be a uniformly convex real Banach space and $K$ be a
nonempty closed convex subset of $X.$ Let $\{T_i\}_{i=1}^m:K\to K$
be a finite family of total asymptotically $I_i-$nonexpansive
continuous mappings with  sequences $\{\mu_{in}\}_{n=1}^{\infty},
\{\lambda_{in}\}_{n=1}^{\infty},$ where $i=\overline{1,m},$ and
$\{I_i\}_{i=1}^{m}:K\to K$ be a finite family total asymptotically
nonexpansive continuous mappings with sequences
$\{\tilde\mu_{in}\}_{n=1}^{\infty},
\{\tilde\lambda_{in}\}_{n=1}^{\infty},$ where $i=\overline{1,m},$
such that $F:=\bigcap\limits_{i=1}^m\left(F(T_i)\cap
F(I_i)\right)\neq\emptyset.$ Suppose
$\sum\limits_{n=1}^\infty\mu_{in} < \infty$,
$\sum\limits_{n=1}^\infty\lambda_{in} < \infty,$
$\sum\limits_{n=1}^\infty\tilde\mu_{in} < \infty,$
$\sum\limits_{n=1}^\infty\tilde\lambda_{in} < \infty$ for all
$i=\overline{1,m}$ and there exist $M_i,M_i^{*},N_i,N_i^{*}>0,$
$i=\overline{1,m}$ such that $\phi_i(\xi_i)\le M_i^{*}\xi_i,$ for
all $\xi_i\ge M_i$ and $\varphi_i(\zeta_i)\le N_i^{*}\zeta_i$ for
all $\zeta_i\ge N_i,$ where $i=\overline{1,m}.$ Then the explicit
iterative sequence $\{x_n\}$ defined by \eqref{explicitmap}
converges strongly to a common fixed point in $F$ if and only if
\begin{eqnarray}\label{conditionforcriteria}
\liminf\limits_{n\to\infty}d(x_n,F)=0.
\end{eqnarray}
\end{theorem}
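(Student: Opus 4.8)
The necessity is immediate: if $x_n\to p$ for some $p\in F$, then $d(x_n,F)\le\|x_n-p\|\to 0$, so in particular $\liminf_{n\to\infty}d(x_n,F)=0$. The substance lies in the sufficiency direction, and my plan is to upgrade the one-step estimate furnished by Lemma \ref{limexistsxnminusp} into a Cauchy property for $\{x_n\}$, and then to identify the limit as a point of $F$.

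First I would return to inequality \eqref{inequalityforxnminusp}, namely $\|x_{n+1}-p\|\le(1+b_n)\|x_n-p\|+c_n$ with $\sum_n b_n<\infty$ and $\sum_n c_n<\infty$, which holds simultaneously for every $p\in F$. Passing to the infimum over $p\in F$ gives $d(x_{n+1},F)\le(1+b_n)d(x_n,F)+c_n$, so Lemma \ref{convergean} yields the existence of $\lim_{n\to\infty}d(x_n,F)$. Combining this with the hypothesis \eqref{conditionforcriteria} forces $\lim_{n\to\infty}d(x_n,F)=0$.

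Next I would iterate the one-step estimate. Using $1+b_n\le e^{b_n}$, for $m>n$ a standard telescoping gives
\[
\|x_m-p\|\le W\Big(\|x_n-p\|+\sum_{k=n}^{\infty}c_k\Big),\qquad W:=\exp\Big(\sum_{k=1}^\infty b_k\Big)<\infty,
\]
valid for every $p\in F$. Given $\varepsilon>0$, since $\lim_n d(x_n,F)=0$ and $\sum_k c_k$ converges, I would choose $N$ so large that $d(x_N,F)<\varepsilon$ and $\sum_{k=N}^\infty c_k<\varepsilon$, and then pick $p^{*}\in F$ realizing the distance up to $\varepsilon$, i.e. $\|x_N-p^{*}\|<\varepsilon$. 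The displayed estimate (with $n=N$) then gives $\|x_m-p^{*}\|<2W\varepsilon$ for all $m\ge N$, whence $\|x_m-x_n\|\le\|x_m-p^{*}\|+\|x_n-p^{*}\|<4W\varepsilon$ for all $m,n\ge N$. Thus $\{x_n\}$ is Cauchy, and since $X$ is complete and $K$ is closed, $x_n\to q$ for some $q\in K$. To see $q\in F$, the continuity hypothesis on the $T_i$ and $I_i$ enters: each $F(T_i)$ and $F(I_i)$ is closed, so $F=\bigcap_{i=1}^m\left(F(T_i)\cap F(I_i)\right)$ is closed; from $x_n\to q$ and $\lim_n d(x_n,F)=0$ we get $d(q,F)=0$, hence $q\in F$.

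The main obstacle is the passage from ``$\lim_n d(x_n,F)=0$'' to the Cauchy property: one must iterate the recursive inequality cleanly, control the telescoping product $\prod_k(1+b_k)$ uniformly by the finite constant $W$, and exploit the freedom to select $p^{*}$ nearly attaining $d(x_N,F)$. Everything else---existence of $\lim_n d(x_n,F)$ via Lemma \ref{convergean}, and closedness of $F$ via continuity---is routine once these bounds are in place.
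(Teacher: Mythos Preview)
Your proposal is correct and follows essentially the same route as the paper: pass to the infimum in \eqref{inequalityforxnminusp} to obtain $d(x_{n+1},F)\le(1+b_n)d(x_n,F)+c_n$, apply Lemma~\ref{convergean} to get $\lim_n d(x_n,F)=0$, telescope via $1+b_n\le e^{b_n}$ to derive a uniform bound $\|x_m-p\|\le W(\|x_n-p\|+\sum_{k\ge n}c_k)$, deduce that $\{x_n\}$ is Cauchy, and conclude using closedness of $F$ (from continuity of the $T_i,I_i$). The only cosmetic difference is that the paper takes the infimum over $p\in F$ directly in the Cauchy estimate, whereas you select a single $p^{*}$ nearly attaining $d(x_N,F)$; both are equivalent.
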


\begin{proof}
The necessity of  condition \eqref{conditionforcriteria}  is
obvious. Let us proof the sufficiency part of Theorem.

Since $\{T_i\}_{i=1}^m,\{I_i\}_{i=1}^{m}:K\to K$ are continuous mappings, the sets $F(T_i)$ and
$F(I_i)$ are closed. Hence $F=\bigcap\limits_{i=1}^m\left(F(T_i)\cap F(I_i)\right)$ is a nonempty closed set.

For any given $p\in F,$ we have (see  \eqref{inequalityforxnminusp})
\begin{eqnarray}\label{xnminusp}
\|x_{n+1}-p\|\le\left(1+b_n\right)\|x_{n}-p\|+c_n,
\end{eqnarray}
Hence, one finds
\begin{eqnarray}\label{dxnF}
d(x_{n+1},F) \le \left(1+b_n\right)d(x_{n},F)+c_n
\end{eqnarray}
From \eqref{dxnF} due to Lemma \ref{convergean} we obtain the
existence of the limit $\lim\limits_{n\to\infty}d(x_n,F)$. By
condition \eqref{conditionforcriteria}, one gets
\begin{eqnarray*}
\lim\limits_{n\to\infty}d(x_n,F)=
\liminf\limits_{n\to\infty}d(x_n,F)=0.
\end{eqnarray*}

Let us prove that the sequence $\{x_n\}$ converges strongly to a common fixed
point in $F.$ We first show that $\{x_n\}$ is Cauchy
sequence in $X.$ In fact, due to $1+t\le \exp(t)$ for all $t>0,$ and
from \eqref{xnminusp}, we obtain
\begin{eqnarray}\label{xnminuspandexp}
\|x_{n+1}-p\| &\le& \exp(b_n)(\|x_{n}-p\|+c_n).
\end{eqnarray}
Thus, for any positive integers $m,n,$ from \eqref{xnminuspandexp}
with $\sum\limits_{n=1}^\infty b_n<\infty,$
$\sum\limits_{n=1}^\infty c_n<\infty,$  we find
\begin{eqnarray*}
\|x_{n+m}-p\| &\le& \exp(b_{n+m-1})(\|x_{n+m-1}-p\|+c_{n+m-1})\\
&\le& \exp(b_{n+m-1}+b_{n+m-2})(\|x_{n+m-2}-p\|+c_{n+m-1}+c_{n+m-2})\\
&\le& \cdots \\
&\le& \exp\left(\sum\limits_{i=n}^{n+m-1} b_i\right)\left(\|x_n-p\|+\sum\limits_{i=n}^{n+m-1} c_i\right)\\
&\le& \exp\left(\sum\limits_{i=n}^{\infty}
b_i\right)\left(\|x_n-p\|+\sum\limits_{i=n}^\infty c_i\right).
\end{eqnarray*}
Therefore we get
\begin{eqnarray}\label{xnplusmp}
\|x_{n+m}-x_n\|&\le&\|x_{n+m}-p\|+\|x_n-p\|\nonumber\\
&\le&\left(1+\exp\left(\sum\limits_{i=n}^{\infty}
b_i\right)\right)\|x_n-p\|+\exp\left(\sum\limits_{i=n}^{\infty}
b_i\right)\sum\limits_{i=n}^\infty c_i\nonumber\\
&\le& W\left(\|x_n-p\|+\sum\limits_{i=n}^\infty c_i\right)
\end{eqnarray}
for all $p\in F$, where $0<W-1=\exp\left(\sum\limits_{i=n}^{\infty}
b_i\right)<\infty.$ Taking infimum over $p\in F$ in \eqref{xnplusmp}
gives
\begin{eqnarray}\label{takeninfumumoverp}
\|x_{n+m}-x_n\|&\le& W\left(d(x_n,F)+\sum\limits_{i=n}^\infty
c_i\right)
\end{eqnarray}

Since $\lim\limits_{n\to\infty}d(x_n,F)=0,$ and
$\sum\limits_{i=1}^\infty c_i<\infty,$ given $\varepsilon>0$ there
exists an integer $N_0>0$ such that for all $n>N_0$ we have
$d(x_n,F)<\cfrac{\varepsilon}{2W}$  and $\sum\limits_{i=n}^\infty
c_i<\cfrac{\varepsilon}{2W}.$ Consequently, for all integers $n\ge
N_0$  and $m\ge1$ and from \eqref{takeninfumumoverp} we derive
\begin{eqnarray*}
\|x_{n+m}-x_n\| &\le& \varepsilon.
\end{eqnarray*}
which means that $\{x_n\}$ is Cauchy sequence in $X,$ and since $X$
is complete there exists $x^{*}\in X$ such that the sequence
$\{x_n\}$ converges strongly to $x^{*}.$

Now we show that $x^{*}$ is a common fixed
point in $F.$ Suppose for contradiction that
$x^{*}\notin F.$ Since $F$ is closed subset of $X,$ we have that
$d(x^{*},F)>0.$ However, for all $p\in F,$ we have
$$\|x^{*}-p\|\le\|x_n-x^{*}\|+\|x_n-p\|.$$ This implies that
$$d(x^{*},F)\le\|x_n-x^{*}\|+d(x_n,F),$$ so that as $n\to\infty$ we
obtain $d(x^{*},F)=0$ which contradicts $d(x^{*},F)>0.$ Hence,
$x^{*}$ is a common fixed
point in $F.$ This proves the
required assertion.
\end{proof}

To formulate and prove the main result, we need one more an
auxiliary result.

\begin{proposition}\label{xnTxn&xnIxn}
Let $X$ be a uniformly convex real Banach space and $K$ be a
nonempty closed convex subset of $X.$ Let $\{T_i\}_{i=1}^m:K\to K$
be a finite family of total asymptotically $I_i-$nonexpansive
continuous mappings with  sequences $\{\mu_{in}\}_{n=1}^{\infty},
\{\lambda_{in}\}_{n=1}^{\infty},$ where $i=\overline{1,m},$ and
$\{I_i\}_{i=1}^{m}:K\to K$ be a finite family total asymptotically
nonexpansive continuous mappings with sequences
$\{\tilde\mu_{in}\}_{n=1}^{\infty},
\{\tilde\lambda_{in}\}_{n=1}^{\infty},$ where $i=\overline{1,m},$
such that $F:=\bigcap\limits_{i=1}^m\left(F(T_i)\cap
F(I_i)\right)\neq\emptyset.$ Suppose
$\sum\limits_{n=1}^\infty\mu_{in} < \infty$,
$\sum\limits_{n=1}^\infty\lambda_{in} < \infty,$
$\sum\limits_{n=1}^\infty\tilde\mu_{in} < \infty,$
$\sum\limits_{n=1}^\infty\tilde\lambda_{in} < \infty$ for all
$i=\overline{1,m},$ and $\{\alpha_{jn}\}_{n=1}^{\infty},
\{\beta_{jn}\}_{n=1}^{\infty}$ are sequences  with
$\{\alpha_{jn}\}_{n=1}^{\infty}\subset[\alpha_{*},\alpha^{*}]$ and
$\{\beta_{jn}\}_{n=1}^{\infty}\subset[\beta_{*},\beta^{*}],$ for all
$j=\overline{0,m},$ here $0<\alpha_{*}<\alpha^{*}<1,$
$0<\beta_{*}<\beta^{*}<1,$ and there exist
$M_i,M_i^{*},N_i,N_i^{*}>0,$ $i=\overline{1,m}$ such that
$\phi_i(\xi_i)\le M_i^{*}\xi_i,$ for all $\xi_i\ge M_i$ and
$\varphi_i(\zeta_i)\le N_i^{*}\zeta_i$ for all $\zeta_i\ge N_i,$
where $i=\overline{1,m}.$    Then the explicit iterative sequence
$\{x_n\}$ defined by \eqref{explicitmap}  satisfies the following
\begin{eqnarray}
\lim\limits_{n\to\infty}\|x_n-T_i^nx_n\|&=&0,\label{limofxnTnxn}\\
\lim\limits_{n\to\infty}\|x_n-I_i^nx_n\|&=&0,\label{limofxnInxn}
\end{eqnarray}
for all $i=\overline{1,m}.$
\end{proposition}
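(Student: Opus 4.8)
The plan is to reduce everything to the generalized Shu lemma, Lemma \ref{generofShu}, applied successively to the two lines of the iteration \eqref{explicitmap}. The starting point is Lemma \ref{limexistsxnminusp}, which guarantees that for a fixed $p\in F$ the limit $\lim_{n\to\infty}\|x_n-p\|=d$ exists.

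First I would rewrite the outer iteration as
$$x_{n+1}-p=\alpha_{0n}(x_n-p)+\sum_{i=1}^m\alpha_{in}(T_i^ny_n-p),$$
and set $z_{0n}=x_n-p$, $z_{in}=T_i^ny_n-p$. Here the coefficients $\alpha_{0n},\dots,\alpha_{mn}$ lie in $[\alpha_{*},\alpha^{*}]$ and sum to $1$, which is exactly the hypothesis needed for Lemma \ref{generofShu} (and is the point at which the flawed argument of \cite{[ChidumeOfoedu2009]} broke down). To apply it I must check $\limsup_n\|z_{jn}\|\le d$ for every $j$. For $j=0$ this is the existence of $d$; for $j=i\ge1$ it follows from \eqref{estimationofynandp}, which yields $\limsup_n\|y_n-p\|\le d$, together with \eqref{inequalityforTnxTny} applied with $x=y_n$, $y=p$, since the multiplicative factor tends to $1$ and the additive terms vanish. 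As $\bigl\|\sum_j\alpha_{jn}z_{jn}\bigr\|=\|x_{n+1}-p\|\to d$, Lemma \ref{generofShu} gives $\lim_n\|T_i^ny_n-p\|=d$ and, crucially, $\lim_n\|x_n-T_i^ny_n\|=\lim_n\|z_{0n}-z_{in}\|=0$ for each $i$.

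The step I expect to be the real hinge is promoting $\limsup_n\|y_n-p\|\le d$ to the full limit $\lim_n\|y_n-p\|=d$. For this I would invert \eqref{inequalityforTnxTny}: from $\|T_i^ny_n-p\|\le(1+\mu_{in}M_i^{*})(1+\tilde\mu_{in}N_i^{*})\|y_n-p\|+\varepsilon_{in}$ with $\varepsilon_{in}\to0$, together with the already established $\|T_i^ny_n-p\|\to d$, I obtain $\liminf_n\|y_n-p\|\ge d$, hence equality. With $\lim_n\|y_n-p\|=d$ in hand, I apply Lemma \ref{generofShu} a second time to $y_n-p=\beta_{0n}(x_n-p)+\sum_{i=1}^m\beta_{in}(I_i^nx_n-p)$, whose coefficients lie in $[\beta_{*},\beta^{*}]$; the required bound $\limsup_n\|I_i^nx_n-p\|\le d$ comes from \eqref{inequalityforInxIny}. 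This immediately yields $\lim_n\|x_n-I_i^nx_n\|=0$, which is \eqref{limofxnInxn}.

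It remains to pass from $\|x_n-T_i^ny_n\|\to0$ to $\|x_n-T_i^nx_n\|\to0$. The key observation is that $\beta_{0n}-1=-\sum_{i=1}^m\beta_{in}$ forces $y_n-x_n=\sum_{i=1}^m\beta_{in}(I_i^nx_n-x_n)$, so that $\|x_n-y_n\|\le\beta^{*}\sum_{i=1}^m\|x_n-I_i^nx_n\|\to0$ by the line just proved. Feeding $\|x_n-y_n\|\to0$ into the asymptotic nonexpansiveness of $I_i$ and then the total asymptotic $I_i$-nonexpansiveness of $T_i$ — using continuity of $\varphi_i,\phi_i$ at $0$ with $\varphi_i(0)=\phi_i(0)=0$ and $\mu_{in},\lambda_{in},\tilde\mu_{in},\tilde\lambda_{in}\to0$ — gives $\|T_i^nx_n-T_i^ny_n\|\to0$. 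The triangle inequality $\|x_n-T_i^nx_n\|\le\|x_n-T_i^ny_n\|+\|T_i^ny_n-T_i^nx_n\|$ then yields \eqref{limofxnTnxn}, completing the argument.
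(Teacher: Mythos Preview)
Your proposal is correct and follows essentially the same architecture as the paper's proof: two successive applications of Lemma~\ref{generofShu} to the outer and inner lines of \eqref{explicitmap}, with the intermediate step of upgrading $\limsup_n\|y_n-p\|\le d$ to a full limit. The only cosmetic differences are that you read off $\lim_n\|T_i^ny_n-p\|=d$ directly from Lemma~\ref{generofShu} (the paper instead routes through $\|x_n-p\|\le\|x_n-T_i^ny_n\|+\|T_i^ny_n-p\|$), and you handle $\|T_i^nx_n-T_i^ny_n\|\to0$ via continuity of $\phi_i,\varphi_i$ at $0$ rather than the linearized estimate \eqref{inequalityforTnxTny}; both variants are equally valid.
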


\begin{proof} According to Lemma \ref{limexistsxnminusp} for any $p\in F$ we have $\lim\limits_{n\to\infty}\|x_n-p\|=d$. It follows from
\eqref{explicitmap} that
\begin{eqnarray}\label{xnpnd}
\|x_{n+1}-p\|&=& \left\|\alpha_{0n}(x_{n}-p)+\sum\limits_{i=1}^m\alpha_{in}(T_i^ny_n-p)\right\|\to d,
\end{eqnarray} as $n\to\infty.$  By means of $\sum\limits_{n=1}^\infty\mu_{in} < \infty$,
$\sum\limits_{n=1}^\infty\lambda_{in} < \infty,$
$\sum\limits_{n=1}^\infty\tilde\mu_{in} < \infty,$
$\sum\limits_{n=1}^\infty\tilde\lambda_{in} < \infty,$ for all $i=\overline{1,m},$ from
\eqref{estimationofynandp} one yields that
\begin{eqnarray}\label{limsupforynp}
\limsup\limits_{n\to\infty}\|y_n-p\|&\le&\limsup\limits_{n\to\infty}\left[\left(1+\sum\limits_{i=1}^m\tilde\mu_{in}\beta_{in}N_i^{*}\right)\|x_n-p\|\right]\nonumber\\
&&+\limsup\limits_{n\to\infty}\left[\sum\limits_{i=1}^m\left(\tilde\mu_{in}\beta_{in}\varphi_i(N_i)+\tilde\lambda_{in}\beta_{in}\right)\right]\nonumber\\
&=&\limsup\limits_{n\to\infty}\|x_n-p\|=d
\end{eqnarray}
and from \eqref{inequalityforTnxTny}, \eqref{limsupforynp} we have
\begin{eqnarray}\label{Tnynp}
\limsup\limits_{n\to\infty}\|T_i^ny_n-p\| &\le&
\limsup\limits_{n\to\infty}[(1+\mu_{in}M_i^{*})(1+\tilde\mu_{in}N_i^{*})\|y_n-p\|\nonumber\\
&&+\limsup\limits_{n\to\infty}\tilde\mu_{in}(1+\mu_{in}M_i^{*})\varphi_i(N_i)]\nonumber\\
&&+\limsup\limits_{n\to\infty}[\tilde\lambda_{in}(1+\mu_{in}M_i^{*})+\mu_{in}\phi_i(M_i)+\lambda_{in}]\nonumber\\
&\le&d
\end{eqnarray}
for all $i=\overline{1,m}.$ Now using
$$\limsup\limits_{n\to\infty}\|x_{n}-p\|=d$$ with \eqref{Tnynp}
and applying Lemma \ref{generofShu} to \eqref{xnpnd} one finds
\begin{eqnarray}\label{xnTnyn}
\lim\limits_{n\to\infty}\|x_{n}-T_i^ny_n\|=0.
\end{eqnarray}
for all $i=\overline{1,m}.$ Now from \eqref{explicitmap} and \eqref{xnTnyn} we infer that
\begin{eqnarray}\label{xnxnminus1}
\lim\limits_{n\to\infty}\|x_{n+1}-x_{n}\| =
\lim\limits_{n\to\infty}\left\|\sum\limits_{i=1}^m\alpha_{in}\left(T_i^ny_n-x_{n}\right)\right\|=0.
\end{eqnarray}
On the other hand, from \eqref{inequalityforTnxTny} we have
\begin{eqnarray*}
\|x_{n}-p\|&\le&\|x_{n}-T_i^ny_n\|+\|T_i^ny_n-p\|\\
&\le& \|x_{n}-T_i^ny_n\|+(1+\mu_{in}M_i^{*})(1+\tilde\mu_{in}N_i^{*})\|y_n-p\|\\
&&+\tilde\mu_{in}(1+\mu_{in}M_i^{*})\varphi_i(N_i)+\tilde\lambda_{in}(1+\mu_{in}M_i^{*})+\mu_{in}\phi_i(M_i)+\lambda_{in}
\end{eqnarray*}
which implies
\begin{eqnarray*}
\|x_{n}-p\|-\|x_{n}-T_i^ny_n\|&\le&(1+\mu_{in}M_i^{*})(1+\tilde\mu_{in}N_i^{*})\|y_n-p\|\\
&&+\tilde\mu_{in}(1+\mu_{in}M_i^{*})\varphi_i(N_i)+\tilde\lambda_{in}(1+\mu_{in}M_i^{*})\\
&&+\mu_{in}\phi_i(M_i)+\lambda_{in}
\end{eqnarray*}
The last inequality with \eqref{limofxnminusp}, \eqref{xnTnyn} yields
\begin{eqnarray}\label{liminfynp}
\liminf\limits_{n\to\infty}\|y_n-p\|=d
\end{eqnarray}
Combining \eqref{liminfynp} with \eqref{limsupforynp} we get
\begin{eqnarray}\label{ynpd}
\lim\limits_{n\to\infty}\|y_n-p\|=d
\end{eqnarray}
Again from \eqref{explicitmap} we can see that
\begin{eqnarray}\label{ynptod}
\|y_n-p\|=\left\|\beta_{0n}(x_n-p)+\sum\limits_{i=1}^m\beta_{in}(I_i^nx_n-p)\right\|\to d, \ \ \
n\to\infty.
\end{eqnarray}
From \eqref{inequalityforInxIny} and \eqref{limofxnminusp}  one
finds
\begin{eqnarray*}
\limsup\limits_{n\to\infty}\|I_i^nx_n-p\|&\le&\limsup\limits_{n\to\infty}((1+\tilde\mu_{in}N_i^{*})\|x_n-p\|+\tilde\mu_{in}\varphi_i(N_i)
+\tilde\lambda_{in})=d
\end{eqnarray*}
for all $i=\overline{1,m}.$
Now applying Lemma \ref{generofShu} to \eqref{ynptod} we obtain
\begin{eqnarray}\label{xnInxn}
\lim\limits_{n\to\infty}\|x_n-I_i^nx_n\|=0
\end{eqnarray} for all $i=\overline{1,m}.$
We then have
\begin{eqnarray}\label{limxnyn}
\lim\limits_{n\to\infty}\|y_n-x_n\|=\lim\limits_{n\to\infty}\left\|\sum\limits_{i=1}^m\beta_{in}(I_i^nx_n-x_n)\right\|=0
\end{eqnarray}
Consider
\begin{eqnarray*}
\|x_n-T_i^nx_n\| &\le&
\|x_{n}-T_i^ny_n\|+\|T_i^ny_n-T_i^nx_n\|\\
&\le& \|x_{n}-T_i^ny_n\|+(1+\mu_{in}M_i^{*})(1+\tilde\mu_{in}N_i^{*})\|y_n-x_n\|\\
&&+(1+\mu_{in}M_i^{*})(\tilde\mu_{in}\varphi_i(N_i)+\tilde\lambda_{in})+\mu_{in}\phi_i(M_i)+\lambda_{in},
\end{eqnarray*} for all $i=\overline{1,m}.$
Then from \eqref{xnTnyn} and \eqref{limxnyn} we get
\begin{eqnarray*}\label{xnTnxn}
\lim\limits_{n\to\infty}\|x_n-T_i^nx_n\|=0,
\end{eqnarray*}
for all $i=\overline{1,m}.$
\end{proof}

Now we are ready to formulate a main result concerning strong
convergence of the sequence $\{x_n\}$.

\begin{theorem}\label{strongconvergence}
Let $X$ be a uniformly convex real Banach space and $K$ be a
nonempty closed convex subset of $X.$ Let $\{T_i\}_{i=1}^m:K\to K$
be a finite family of total asymptotically $I_i-$nonexpansive
continuous mappings with sequences $\{\mu_{in}\}_{n=1}^{\infty},
\{\lambda_{in}\}_{n=1}^{\infty},$ where $i=\overline{1,m},$ and
$\{I_i\}_{i=1}^{m}:K\to K$ be a finite family total asymptotically
nonexpansive continuous mappings with sequences
$\{\tilde\mu_{in}\}_{n=1}^{\infty},
\{\tilde\lambda_{in}\}_{n=1}^{\infty},$ where $i=\overline{1,m},$
such that $F:=\bigcap\limits_{i=1}^m\left(F(T_i)\cap
F(I_i)\right)\neq\emptyset.$ Suppose
$\sum\limits_{n=1}^\infty\mu_{in} < \infty$,
$\sum\limits_{n=1}^\infty\lambda_{in} < \infty,$
$\sum\limits_{n=1}^\infty\tilde\mu_{in} < \infty,$
$\sum\limits_{n=1}^\infty\tilde\lambda_{in} < \infty$ for all
$i=\overline{1,m},$ and $\{\alpha_{jn}\}_{n=1}^{\infty},
\{\beta_{jn}\}_{n=1}^{\infty}$ are sequences  with
$\{\alpha_{jn}\}_{n=1}^{\infty}\subset[\alpha_{*},\alpha^{*}]$ and
$\{\beta_{jn}\}_{n=1}^{\infty}\subset[\beta_{*},\beta^{*}],$ for all
$j=\overline{0,m},$ here $0<\alpha_{*}<\alpha^{*}<1,$
$0<\beta_{*}<\beta^{*}<1,$ and there exist
$M_i,M_i^{*},N_i,N_i^{*}>0,$ $i=\overline{1,m}$ such that
$\phi_i(\xi_i)\le M_i^{*}\xi_i,$ for all $\xi_i\ge M_i$ and
$\varphi_i(\zeta_i)\le N_i^{*}\zeta_i$ for all $\zeta_i\ge N_i,$
where $i=\overline{1,m}.$  If at least one mapping of the mappings
$\{T_i\}_{i=1}^m$ and $\{I_i\}_{i=1}^{m}$ is compact, then the
explicitly iterative sequence $\{x_n\}$ defined by
\eqref{explicitmap} converges strongly to a common fixed point of
$\{T_i\}_{i=1}^m$ and $\{I_i\}_{i=1}^{m}.$
\end{theorem}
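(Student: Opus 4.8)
The plan is to reduce the statement to the criterion furnished by Theorem \ref{criteria}: since strong convergence to a point of $F$ is equivalent to $\liminf_{n\to\infty}d(x_n,F)=0$, it suffices to produce a subsequence of $\{x_n\}$ that converges strongly to a point of $F$, for then $d(x_{n_k},F)\to 0$ forces $\liminf_{n\to\infty}d(x_n,F)=0$. Throughout I may use that $\{x_n\}$ is bounded (because $\lim_{n\to\infty}\|x_n-p\|$ exists for each $p\in F$ by Lemma \ref{limexistsxnminusp}) and that, by Proposition \ref{xnTxn&xnIxn}, $\|x_n-T_i^nx_n\|\to 0$ and $\|x_n-I_i^nx_n\|\to 0$ for every $i=\overline{1,m}$.

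First I would extract a strongly convergent subsequence using the compactness hypothesis. Suppose $T_1$ is compact (the argument when some $I_i$ is compact is identical). Since $p\in F$ is a fixed point of $T_1$, applying \eqref{inequalityforTnxTny} with exponent $n-1$ gives $\|T_1^{n-1}x_n-p\|=\|T_1^{n-1}x_n-T_1^{n-1}p\|\le (1+\text{o}(1))\|x_n-p\|+\text{o}(1)$, so $\{T_1^{n-1}x_n\}$ is bounded. As $T_1$ is compact, the set $\{T_1^{n}x_n\}=\{T_1(T_1^{n-1}x_n)\}$ is relatively compact, whence some subsequence satisfies $T_1^{n_k}x_{n_k}\to q\in K$. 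Because $\|x_{n_k}-T_1^{n_k}x_{n_k}\|\to 0$, we also obtain $x_{n_k}\to q$.

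The principal obstacle is to pass from these asymptotic relations to genuine fixed-point equations $T_iq=q$ and $I_iq=q$, since only continuity (not uniform continuity) of the mappings is available. The device is the one-step-shifted regularity $\|x_n-T_i^{n+1}x_n\|\to 0$ and $\|x_n-I_i^{n+1}x_n\|\to 0$ for every $i$. Indeed, writing
\[
\|x_n-T_i^{n+1}x_n\|\le \|x_n-x_{n+1}\|+\|x_{n+1}-T_i^{n+1}x_{n+1}\|+\|T_i^{n+1}x_{n+1}-T_i^{n+1}x_n\|,
\]
the first term tends to $0$ by \eqref{xnxnminus1}, the second by Proposition \ref{xnTxn&xnIxn}, and the third is controlled by \eqref{inequalityforTnxTny}, whose leading coefficient tends to $1$ and whose additive terms tend to $0$ while $\|x_{n+1}-x_n\|\to 0$; the analogous estimate via \eqref{inequalityforInxIny} handles $I_i$.

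Finally I would combine these facts along the subsequence. From $x_{n_k}\to q$ together with $\|x_{n_k}-T_i^{n_k}x_{n_k}\|\to 0$ and $\|x_{n_k}-T_i^{n_k+1}x_{n_k}\|\to 0$ we get $T_i^{n_k}x_{n_k}\to q$ and $T_i^{n_k+1}x_{n_k}\to q$; since $T_i^{n_k+1}x_{n_k}=T_i(T_i^{n_k}x_{n_k})$ and $T_i$ is continuous, the first limit yields $T_i(T_i^{n_k}x_{n_k})\to T_iq$, and comparison with the second gives $T_iq=q$. The identical argument, using \eqref{limofxnInxn} and the shifted regularity for $I_i$, gives $I_iq=q$. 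Hence $q\in F(T_i)\cap F(I_i)$ for all $i$, so $q\in F$, and $d(x_{n_k},F)\le\|x_{n_k}-q\|\to 0$. Therefore $\liminf_{n\to\infty}d(x_n,F)=0$ and Theorem \ref{criteria} completes the proof. Equivalently, since $\lim_{n\to\infty}\|x_n-q\|$ exists by Lemma \ref{limexistsxnminusp} and vanishes along $\{n_k\}$, the whole sequence $\{x_n\}$ converges strongly to $q\in F$.
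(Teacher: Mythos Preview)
Your argument is correct and follows essentially the same route as the paper: extract a convergent subsequence via compactness, use the asymptotic regularity from Proposition~\ref{xnTxn&xnIxn} together with $\|x_{n+1}-x_n\|\to 0$ and continuity to identify the subsequential limit as a point of $F$, then upgrade to full-sequence convergence via the existence of $\lim_n\|x_n-p\|$. The only cosmetic differences are that you package the ``one-step shift'' $\|x_n-T_i^{n+1}x_n\|\to 0$ as a separate intermediate lemma (the paper folds it into the final triangle inequality for $\|x^*-T_ix^*\|$) and that you route the conclusion through Theorem~\ref{criteria} rather than invoking Lemma~\ref{limexistsxnminusp} directly---but you note the latter alternative yourself, so the two proofs are effectively the same.
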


\begin{proof}
Without any loss of generality, we may assume that $T_1$ is compact.
This means that there exists a subsequence $\{T_1^{n_k}x_{n_k}\}_{k=1}^\infty$ of
$\{T_1^nx_n\}_{n=1}^\infty$ such that $\{T_1^{n_k}x_{n_k}\}_{k=1}^\infty$ converges strongly to
$x^{*}\in K.$ Then from \eqref{limofxnTnxn} we have that
$\{x_{n_k}\}_{k=1}^\infty$ converges strongly to $x^{*}.$ Also from \eqref{limofxnTnxn}, we obtain   $\{T_i^{n_k}x_{n_k}\}_{k=1}^\infty$ converges strongly to $x^{*},$ for all $i=\overline{2,m}.$  Since $\{T_i\}_{i=1}^m$ are continuous mappings, so  $\{T_i^{n_k+1}x_{n_k}\}_{k=1}^\infty$ converges strongly to $T_ix^{*},$ for all $i=\overline{1,m}.$ On the other hand, from \eqref{limofxnInxn} and
continuousness of $\{I_i\}_{i=1}^m$ we obtain that $\{I_i^{n_k}x_{n_k}\}_{k=1}^\infty$ converges
strongly to $x^{*}$ and $\{I_i^{n_k+1}x_{n_k}\}_{k=1}^\infty$ converges strongly to
$I_ix^{*},$ for all $i=\overline{1,m}.$ Due to \eqref{xnxnminus1}, $\{\|x_{n_k+1}-x_{n_k}\|\}$ converges to $0,$ as $k\to\infty.$
 Then, $\{x_{n_k+1}\}_{k=1}^\infty$ converges strongly to $x^{*}$ and moreover, \eqref{inequalityforTnxTny} and \eqref{inequalityforInxIny} imply
that $\{\|T_i^{n_k+1}x_{n_k+1}-T_i^{n_k+1}x_{n_k}\|\}$ and
$\{\|I_i^{n_k+1}x_{n_k+1}-I_i^{n_k+1}x_{n_k}\|\}$ converge to $0,$  as $k\to\infty,$ for all $i=\overline{1,m}.$ From \eqref{limofxnTnxn}, \eqref{limofxnInxn} it yields that $\|x_{n_k+1}-T_i^{n_k+1}x_{n_k+1}\|$ and $\|x_{n_k+1}-I_i^{n_k+1}x_{n_k+1}\|$ converge to $0$ as $k\to\infty,$ for all $i=\overline{1,m}.$
Observe that
\begin{eqnarray*}
\|x^{*}-T_ix^{*}\|&\le&\|x^{*}-x_{n_k+1}\|+\|x_{n_k+1}-T_i^{n_k+1}x_{n_k+1}\|\\
&&+\|T_i^{n_k+1}x_{n_k+1}-T_i^{n_k+1}x_{n_k}\|+\|T_i^{n_k+1}x_{n_k}-T_ix^{*}\|,\\
\|x^{*}-I_ix^{*}\|&\le&\|x^{*}-x_{n_k+1}\|+\|x_{n_k+1}-I_i^{n_k+1}x_{n_k+1}\|\\
&&+\|I_i^{n_k+1}x_{n_k+1}-I_i^{n_k+1}x_{n_k}\|+\|I_i^{n_k+1}x_{n_k}-I_ix^{*}\|,\\
\end{eqnarray*}
for all $i=\overline{1,m}.$ Taking limit as $k\to\infty$ we have that $x^{*}=T_ix^{*}$ and $x^{*}=I_ix^{*},$ for all $i=\overline{1,m},$ which means $x^{*}\in F.$
However, by Lemma \ref{limexistsxnminusp}, the limit $\lim\limits_{n\to\infty}\|x_n-x^{*}\|$ exists then
$$\lim\limits_{n\to\infty}\|x_n-x^{*}\|=\lim\limits_{n_k\to\infty}\|x_{n_k}-x^{*}\|=0,$$
which means $\{x_n\}$ converges strongly to $x^{*}\in F.$ This completes the
proof.
\end{proof}

{\bf Remark.} If one has that all $I_i$ are identity mappings, then
the obtained results recover and correctly prove the main result of
\cite{[ChidumeOfoedu2009]}.

{\bf Remark.} Suppose we are given two family $\{T_i\}_{i=1}^m:K\to
K$ and $\{S_i\}_{i=1}^m:K\to K$  of total asymptotically
nonexpansive continuous mappings such that
$\bigcap\limits_{i=1}^m\left(F(T_i)\cap F(S_i)\right)\neq\emptyset.$
Define the following explicit iterative process:
\begin{eqnarray}\label{explicitmap111}
\left\{ \begin{array}{ccc}
          x_0\in K,\\
          x_{n+1} =  \alpha_{0n}x_{n}+\sum\limits_{i=1}^m\alpha_{in} T_i^ny_n +\alpha_{m+1,n}u_n\\
          y_n =  \beta_{0n}x_n+\sum\limits_{i=1}^m\beta_{in} S_i^nx_n+\beta_{m+1,n}v_n.
        \end{array}\right.
\end{eqnarray}
such that $\sum\limits_{j=0}^{m+1}\alpha_{jn}=1$ and
$\sum\limits_{j=0}^{m+1}\beta_{jn}=1.$

Under suitable conditions, by the same argument and methods used
above one can prove, with  either little mirror or no modifications,
the strong convergence of the explicit iterative process defined by
\eqref{explicitmap111} to a common fixed point of the given
families.

 {\bf Remark.} Let $\{T_i\}_{i=1}^m:K\to K$ be a finite
family of total asymptotically nonexpansive continuous mappings with
sequences $\{\mu_{in}\}_{n=1}^{\infty},
\{\lambda_{in}\}_{n=1}^{\infty},$ where $i=\overline{1,m}$. It is
clear for each operator $T_i$ one has
\begin{equation}\label{t-i}
\|T^n_ix-T^n_iy\|\leq \|T^n_ix-T^n_iy\|+\mu_{in}\|T^n_ix-T^n_iy\|,
\end{equation}
this means that $T_i$ is total asymptotically $T_i$-nonexpansive mappings with sequence $\{\mu_{in}\}_{n=1}^{\infty}$ and the function $\phi(\lambda)=\lambda$. Hence, our iteration scheme can be
written as follows
\begin{eqnarray}\label{1explicitmap}
\left\{ \begin{array}{ccc}
          x_0\in K,\\
          x_{n+1} =  \alpha_{0n}x_{n}+\sum\limits_{i=1}^m\alpha_{in} T_i^ny_n \\
          y_n =  \beta_{0n}x_n+\sum\limits_{i=1}^m\beta_{in} T_i^nx_n.
        \end{array}\right.
\end{eqnarray}
where $\{\alpha_{jn}\}_{n=1}^\infty,$ $\{\beta_{jn}\}_{n=1}^\infty$ in
$(0,1),$ ($j=\overline{0,m})$ with   $\sum\limits_{j=0}^m\alpha_{jn}=1$, $\sum\limits_{j=0}^m\beta_{jn}=1.$

The defined scheme is a new iterative method generalizing one given
in \cite{[ChidumeOfoedu2009]}. So, according our main results for
the defined sequence $\{x_n\}$ (see \eqref{1explicitmap}) we obtain
strong convergence theorems. On the other hand, playing with numbers
$\{\alpha_{jn}\}_{n=1}^\infty,$ $\{\beta_{jn}\}_{n=1}^\infty$ and by
means of the defined method one may introduce lots of different
schemes. All of the them strongly converges to a common fixed point
of $\{T_i\}_{i=1}^m$.   Moreover, the recursion formula
\eqref{1explicitmap} is much simpler than the others studied earlier
for this problem \cite{[AlberChidumeZegeye]},
\cite{[ChidumeOfoedu]}, \cite{Gu},
\cite{T},\cite{[Sahu]},\cite{AEL},\cite{MS},\cite{GC},\cite{ZCK}.
Therefore, all presented results here generalize, unify and extend
the corresponding main results of the mentioned papers. Note that
one can consider the method \eqref{explicitmap} with errors, and all
the theorems could be carry over for such iteration scheme as well
with little or no modifications.

We stress that  all the theorems of this paper carry over to the
class of total asymptotically quasi $I$-nonexpansive mappings (see
\cite{[ChidumeOfoedu2009]}), \cite{ZH} with little or no
modifications.

\section*{Acknowledgments} A part of this work was done at the Abdus Salam International
Center  for Theoretical Physics (ICTP), Trieste, Italy. The first
named author (FM) thanks the ICTP for providing financial support
during his visit as a  Junior Associate at the
centre. The authors also acknowledge the Malaysian
Ministry of Science, Technology and Innovation Grant
01-01-08-SF0079.

\end{document}